\renewcommand{\epsilon}{\varepsilon}
\newcommand{\oo}{\infty}
\newcommand{\A}{\mathcal{A}}
\newcommand{\C}{\mathcal{C}}
\newcommand{\F}{\mathcal{F}}
\newcommand{\G}{\mathcal{G}}
\newcommand{\N}{\mathcal{N}}
\renewcommand{\S}{\mathcal{S}}
\newcommand{\T}{\mathcal{T}}
\newcommand{\U}{\mathcal{U}}
\newcommand{\V}{\mathcal{V}}
\newcommand{\EE}{\mathbb{E}}
\newcommand{\NN}{\mathbb{N}}
\newcommand{\PP}{\mathbb{P}}
\newcommand{\RR}{\mathbb{R}}
\newcommand{\ZZ}{\mathbb{Z}}
\newcommand{\CC}{\mathscr{C}}
\newcommand{\LL}{\mathscr{L}}
\newcommand{\dd}{{\mathrm d}}
\newcommand{\sgn}{\operatorname{sgn}}
\newcommand{\I}{\mathds{1}}
\theoremstyle{plain}
\newtheorem{theorem}{Theorem}[section]
\newtheorem{lemma}[theorem]{Lemma}
\newtheorem{proposition}[theorem]{Proposition}
\newtheorem*{assumption*}{Assumption}
\newtheorem*{lemma*}{Lemma}
\newtheorem*{proposition*}{Proposition}
\newtheorem*{theorem*}{Theorem}
\theoremstyle{definition}
\newtheorem*{definition*}{Definition}
\theoremstyle{remark}
\newtheorem*{claim*}{Claim}
\newtheorem*{remark*}{Remark}
\newcommand{\mydate}{{\today}. \currenttime}
\date{}
\renewcommand{\baselinestretch}{1.15}
\begin{document}

\title{
Stability of the Greedy Algorithm on the Circle
}

\author{
Leonardo T. Rolla, Vladas Sidoravicius
\\ \small
Argentinian National Research Council, University of Buenos Aires
\\ \small
NYU-ECNU Institute of Mathematical Sciences at NYU Shanghai
\\ \small
Courant Institute of Mathematical Sciences, New York University
}

\maketitle

\begin{abstract}
We consider a single-server system with service stations in each point of the circle. Customers arrive after exponential times at uniformly-distributed locations. The server moves at finite speed and adopts a greedy routing mechanism. It was conjectured by Coffman and Gilbert in~1987 that the service rate exceeding the arrival rate is a sufficient condition for the system to be positive recurrent, for any value of the speed. In this paper we show that the conjecture holds true.
\end{abstract}

This preprint has the same numbering of sections, equations theorems and figures as the the
published article
``\emph{Comm. Pure Appl. Math. 70 (2017): 1961--1986.}''

\thispagestyle{empty}

\section{Introduction}
\label{sec:sec1introduction}

In this paper we study a greedy single-server system on the unit-length circle
$\RR/\ZZ$.
Customers arrive following a Poisson process with rate $\lambda$.
Each arriving customer chooses a position on $\RR/\ZZ$ uniformly at random and
waits for service.
If there are no customers in the system, the server stands still.
Otherwise, the server chooses the nearest waiting customer and travels in that 
direction at speed $v > 0$, ignoring any new arrivals.
Upon reaching the position of such customer, the server stays there until
service completion, which takes a random time $T$ that is independent
of the past configurations and has expectation $\mu^{-1}$.

The above system was introduced by Coffman and Gilbert in
1987~\cite{coffman-gilbert-87}, and since then became a paradigm example of a
routing mechanism
that depends on the system state.
This is the so-called \emph{greedy server}, due to the simple strategy of
targeting the nearest customer.

Continuous-space models provide natural approximations for systems with a large
number of service stations embedded in a spacial structure,
and their description is usually more transparent than the discrete-space
formulation, mostly because the latter often is obscured by combinatorial
aspects.
However, systems with greedy routing strategies in the continuum are
extremely sensitive to microscopic perturbations, and their rigorous study
represents a mathematical challenge.

It was conjectured in~\cite{coffman-gilbert-87} that the greedy server on the
circle should be a stable system when $\lambda<\mu$ for any $v>0$.
Since then, a number of related models have been proposed and studied.
Stability was verified under light-traffic assumptions, i.e., for
$\lambda$ and $\mu$ fixed and $v$ large enough,
and for the greedy server
on a discrete ring
$\ZZ/n\ZZ$.
% see Section~\ref{sec:queueing}.
%  for historical background).
However, these approximations were unable to identify and tackle the main
difficulty of this system, which is is due to the
interplay between the server's motion and
the environment of waiting customers that surround it.
This interplay is given by the
interaction resulting from the
choice of the next customer and the removal of those who have been
served.
In this paper we prove stability for the greedy server.

\begin{definition*}
We say that $t$ is a \emph{regeneration time} if the system becomes empty at
time $t$, i.e., if there is one customer at time $t-$ and no
customers at time $t+$.
Let $\tau_\clock:=\inf\{t>0:t\mbox{ is a regeneration time}\}$.
We say that the system is \emph{recurrent} if, starting from the empty state
$\clock$, there will be a.s.\ a regeneration time, i.e.,
$\PP^\clock[\tau_\clock<\infty]=1$.
We say that the system is \emph{stable}, or \emph{positively recurrent}, if
$\EE^{\clock}[\tau_\clock]<\infty$.
\label{text:recurrent}
\end{definition*}

\begin{theorem}
\label{thm:stability}
Suppose that the distribution of the service time $T$ is geometric, exponential,
or deterministic.
For any $\lambda<\mu$ and any $v>0$, the greedy server on the circle is stable.
\end{theorem}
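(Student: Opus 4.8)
The natural framework is a Lyapunov/drift argument for the embedded chain, but the greedy dynamics make the state space infinite-dimensional (the configuration is a point process on the circle plus the server location), so one cannot hope for a one-step negative-drift condition. The plan is instead to build a coupling/comparison with an auxiliary workload process and to exhibit a \emph{multi-step} (in fact, random-horizon) contraction. Let me sketch the main lines.

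First, I would set up a good state descriptor. At a fixed time the state is $(x, \xi)$ where $x \in \RR/\ZZ$ is the server position and $\xi$ is the finite set of waiting customers. The key observation behind greedy routing on the circle is that once the server commits to a direction, it sweeps an arc; so it is convenient to track the total ``residual sweep length'' together with the number $N$ of customers, and to compare the dynamics with an $M/G/1$-type clearing model. Concretely: over any time interval the server alternately \emph{travels} (rate of progress $v$ along the circle) and \emph{serves} (mean time $\mu^{-1}$ per customer). While $N \geq 1$, the server is always doing productive work, and between consecutive service completions it spends travel time equal to (arc to the nearest customer)$/v$. The heuristic is that the nearest-customer distances cannot be large when $N$ is large — typically of order $1/N$ — so the travel overhead per customer is $O(1/(vN))$, hence negligible in the heavy-traffic regime, and the system behaves like an $M/G/1$ queue with service requirement $T + O(1/(vN))$, which is stable since $\lambda < \mu$. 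Turning this heuristic into a proof is the crux.

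The technical heart, and the main obstacle, is controlling the travel overhead: a priori the greedy server can be ``trapped'' shuttling back and forth between two clusters separated by a nearly empty arc, so that the nearest-customer distance stays order $1$ even when $N$ is large. To rule this out I would prove a geometric lemma: define, for a configuration with $N$ customers, the quantity $G(\xi) = $ (length of the largest customer-free arc), and show that along the greedy trajectory $G$ cannot stay large for long once $N$ is large, because each time the server crosses a large gap it ``harvests'' an entire side, so such crossings are rare; quantitatively, the cumulative travel distance over a busy cycle is bounded by a constant times (number of distinct maximal gaps ever crossed) plus the total circle length times (number of customers served), and the former is dominated because arrivals are uniform (a fresh uniform arrival lands in a gap of length $\ell$ with probability $\ell$, so large gaps get filled quickly). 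This is where the hypothesis that $T$ is geometric/exponential/deterministic is used: it gives enough independence/regularity to renew the argument at service completions and to get exponential-type tail bounds on the excursions of $N$.

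With the geometric lemma in hand, I would assemble the proof as follows. (i) Show the system is recurrent: starting from empty, couple $N_t$ from above by an $M/G/1$ queue length with a slightly inflated (but finite-mean) service time and conclude $\tau_\clock < \infty$ a.s. (ii) Build a Lyapunov function of the form $L(x,\xi) = c_1 N + c_2 \,(\text{total residual travel}) + c_3 \sum_i (\text{sorted gap sizes})^{\gamma}$ and show that along the embedded chain at service completions, $\EE[L(\text{next}) - L(\text{now}) \mid \text{now}] \leq -\varepsilon$ whenever $N \geq N_0$, with $N_0$ chosen via the geometric lemma so that the travel overhead is $\leq (\mu^{-1}-\lambda^{-1})/2$ per customer; on $\{N < N_0\}$ bound the drift by a constant. (iii) Apply the Foster--Lyapunov criterion to the embedded chain to get positive recurrence of $\{N < N_0\}$, then convert back to continuous time using the fact that expected inter-completion times are uniformly bounded (again finite mean $T$ plus bounded travel time $\leq 1/v$), yielding $\EE^\clock[\tau_\clock] < \infty$. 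Steps (i) and (iii) are standard once (ii) holds; essentially all the difficulty is concentrated in the gap-control estimate of step (ii), i.e.\ in showing the greedy server is not asymptotically trapped.
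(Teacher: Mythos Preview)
Your high-level architecture---a Lyapunov functional with a random-horizon drift, the crux being control of travel overhead---matches the paper's. But the paper's execution differs substantially, and your sketch has a real gap precisely where you flag the difficulty.

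The paper does not track the customer configuration $\xi$. It passes to the \emph{process viewed from the server}: conditionally on the server's trajectory, the unserved customers form an inhomogeneous Poisson process on $\RR/\ZZ$ with intensity $-\lambda u_t(x)\,\dd x$, where $u_t(x)\le 0$ is the \emph{age} of location $x$ (time since it was last cleared). This potential $u_t$ is a deterministic function of the server path---here the Poisson-in-space-time assumption is essential---and it is always a \emph{proper} (unimodal) potential: one local maximum at the server, one local minimum. The Lyapunov functional is $B(u)=A(u)+4\epsilon N(u)$ with $A=\int(-\lambda u)$ and $N=\sup(-u)$; the $N$-term is what handles the dangerous scenario, which is not a large \emph{gap} but a narrow arc of extremely high \emph{age} (hence high density), after clearing which the server may reverse. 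Drift is established not at single service completions but at a stopping time $\T$ (the first time the server has swept the region $\{-u_0>N/2\}$, capped deterministically at $\Psi B$), and the key input is that the total travel time $\mathscr{M}$ up to $\T$ is \emph{tight uniformly} over all initial proper states. This tightness is obtained by coupling with the greedy server on $\RR$ (periodic extension of $u$) and invoking a strong-transience result of Foss--Rolla--Sidoravicius: on the line the server a.s.\ picks a direction after a tight number of services.

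Your step~(ii), a one-step negative drift at service completions, cannot work as stated: from a state with many customers clustered near the antipode, the next travel time is $\Theta(1)$, and a penalty of the form $\sum g_i^{\gamma}$ does not compensate in a single step (removing one customer merges two gaps, changing the sum by $O(1)$). More importantly, your ``geometric lemma'' (uniform arrivals fill large gaps) attacks the wrong mechanism: the server may reverse long before the freshly cleared arc behind it is repopulated, so what is actually needed is a direct bound on oscillation. That is exactly the content of the transience-on-the-line result the paper imports, and without the process-viewed-from-server reduction---which is what makes the unimodality of $u$ and the line coupling available---there is no visible path from your outline to a proof.
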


\begin{remark*}
In our approach, it is crucial that the arrivals are Poisson in space-time.
There is a \emph{dynamic} version of the greedy server, where \emph{new arrivals
are not ignored} while the server is traveling.
This variation might be studied by similar arguments, but the dynamic
mechanism introduces some extra complications that will not be considered here.
A proof of stability for general service times having an exponential moment
follows from the same approach as presented here, requiring a little extra work
due to the lack of Markov property.
We present the proof for exponentially distributed service times with
$\mu=1$.
The cases of geometric or deterministic times only differ in notation.
\end{remark*}

% The greedy server has been compared numerically with the
% so-called \emph{polling server}, whose strategy is to
% always travel clockwise regardless of the system state.
% Simulations indicate that the greedy server is more efficient
% when there are few customers in the system, and slightly less efficient under
% intense traffic, as shown in~\cite[Fig.~5]{coffman-gilbert-87},
% although both systems seem to behave similarly in the latter case.
% The rigorous proof of stability for the polling server is relatively simple:
% it
% takes a fixed amount of traveling time to visit the whole circle, which gives
% the server a strong advantage when the workload is too heavy.

For the proof of Theorem~\ref{thm:stability}, we consider a
representation for the conditional distribution of
the set of waiting customers in terms of a
stochastic evolution of profiles.
In this framework, the server learns only the information that is necessary and
sufficient to determine the next movement, and the positions of further waiting
customers remain unknown.
This approach was used in~\cite{FossRollaSidoravicius15} to show that the greedy server on the
real line is transient, which is an important ingredient in our proof of
stability.

In the remainder of this introduction, we
review some known results on the greedy server and related models,
discuss the problem of self-interaction,
describe the approach based on a stochastic evolution of profiles,
present a heuristic discussion in order to highlight
the main ideas of the proof,
and finally give a brief outline of the paper structure.

\subsection{Previous results on the greedy server and related models}
\label{sec:queueing}

% We briefly review previous progress on Coffman-Gilbert's conjecture.
% We do not claim to make a historically complete survey.
% \marginnote{?}

% The goal is to describe some known results and approaches, and

Stability was verified for the greedy server on $\RR/\ZZ$ under light-traffic
assumptions~\cite{kroese-schmidt-96} and for the greedy server
on a discrete ring $\ZZ/n\ZZ$~\cite{foss-last-96,foss-last-98,meester-quant-99},
see below.
It was also shown for several related models, including a class
of non-greedy policies~\cite{kroese-schmidt-94},
a gated-greedy variant on convex spaces~\cite{altman-levy-94},
and random non-greedy servers on general spaces~\cite{altman-foss-97}.
See~\cite{rojasnandayapa-foss-kroese-11} and references therein for a recent
review.

The \emph{light-traffic} regime is given by
\[
  \lambda \left( \frac{1}{\mu} + \frac{1}{2v} \right) < 1.
\]
% or roughly speaking, $\lambda<\mu$ fixed and $v$ large enough or $\mu$
% and $v$ fixed and $\lambda$ small enough.
This regime was studied in~\cite{kroese-schmidt-96}, particularly the limit
$\lambda \to 0$ for which the first terms of Taylor expansions of some
performance measurements were computed.
A simple coupling argument works for proving stability under
light-traffic assumption.
In this case, $\frac{1}{2v}$ gives an upper bound for the travel time between two
consecutive services, since $\frac{1}{2}$ is the largest distance within
the unit-length circle.
Adding this bound to the service time allows a
comparison between the greedy server on $\RR/\ZZ$ and a stable $M/G/1$ system,
which proves that the former is stable.

This kind of argument could be pushed down to smaller values
of~$v$ than the above inequality allows by obtaining a stochastic
upper bound on the distance to the nearest customer better than~$\frac{1}{2}$.
But in any case, it may not be extended to general~$v>0$,
because the presence of the server interferes severely with the
conditional distribution of the locations of remaining customers.

On the other hand, stability under the general condition $\lambda<\mu$
is known to hold for the \emph{polling server} on $\RR/\ZZ$, i.e., the server
whose strategy is to always travel in the same direction.
In~\cite{kroese-schmidt-92} this fact was proven using a decomposition of the
set of waiting customers into a collection of Galton-Watson trees that turn out
to be subcritical for $\lambda<\mu$.
This decomposition provides a detailed description of the busy cycles
(sequence of configurations observed between two consecutive regeneration times)
and the stationary state, but if one only wants to prove stability, there
is a simple and robust argument.
Take $\epsilon<1-\frac{\lambda}{\mu}$ and $K$ such that
\[
  \frac{K}{\mu} + \frac{1}{v} < (1-\epsilon) \frac{K}{\lambda}.
\]
The above inequality implies that,
whenever the number of waiting customers $N$ is larger than $K$, the time it
will typically take to serve the first $N$
customers, including service and travel time, is less than the time it will
typically take for the next $N-\epsilon N$ arrivals, resulting in a net
decrease by~$\epsilon N$ on the number of waiting customers.

Simulations indicate that, under heavy traffic conditions, the
greedy
server dynamics resembles that of the polling server~\cite{coffman-gilbert-87}.
This suggests that a possible strategy for proving stability of the
greedy
server might be to adapt the above argument.
In this case, the first step would be to
understand its
local behavior, and 
a natural approach is to consider a system on an infinite
line.
A model on~$\ZZ$
was studied
in~\cite{kurkova-menshikov-97}, where it is shown that the server
is eventually going to move in a fixed random direction.

Some direct attempts also include the study of a greedy server model on
the finite ring $\ZZ/n\ZZ$, which was shown to be stable
in~\cite{foss-last-96,foss-last-98,meester-quant-99}.
Each of these references provides different arguments under a variety
of general assumptions.

Yet discrete models have not been able to grasp the microscopic
nature of the greedy mechanism in continuous space, neither on $\ZZ$ nor on
$\ZZ/n\ZZ$, and there are major
obstacles in extrapolating any approach based on a discrete approximation.
This difficulty is due to the \emph{self-interaction} of the server's path at
the \emph{microscopic level},
which takes place because
the server's trajectory influences the set of waiting customers and at the same
time is determined by the latter.

\subsection{Self-interacting motions}
\label{sec:self-interaction}

The main difficulty in studying greedy server systems in continuous spaces is
due to the interplay between the server's motion and the environment of waiting
customers that surround it.
This interplay is given by the interaction resulting from the greedy choice of
the next customer and the removal of those who have been served.
The server's path is \emph{self-repelling}, since the removal of already
served customers makes it less likely for the greedy server to take the next
step back into the recently visited regions.

In some well-known examples of self-repelling motions, the self-interaction
comes from an explicit prescription of the distribution of the next step in terms of
the past occupation times.
For the
excited random walks~\cite{benjamini-wilson-03},
perturbed Brownian
motions~\cite{carmona-petit-yor-98,chaumont-doney-99,davis-96,davis-99,
perman-werner-97},
and
excited Brownian motions~\cite{raimond-schapira-11},
whenever there is a drift, it is pushing the motion in a certain
fixed direction.
For the random walk avoiding its past convex
hull~\cite{angel-benjamini-virag-03,zerner-05}
and
the prudent walk~\cite{beffara-friedli-velenik-10,bousquet-melou-10},
there is a growing forbidden region containing the previous trajectory, which
strongly pushes the motion outwards.

For our greedy server, and also for the true self-avoiding
walk~\cite{toth-95,toth-99}, the true self-repelling
motion~\cite{toth-werner-98}, and the
Brownian motion with repulsion~\cite{mountford-tarres-08},
there is a mixture of information, and
``self-repulsion'' does not immediately imply ``repulsion towards~$\oo$'', since
the particle is allowed to cross its past path, receiving contradictory signals
from its left and right-hand sides.
In fact, some of the latter models are recurrent and some are transient.

It was clear since these models were introduced that they
could not be treated via standard methods and tools.
A lot remains to be understood even in dimension $d=1$,
and, despite the existence of a few disconnected techniques
that have proved useful in particular situations,
this rich field of study lacks a systematic basis.\footnote
{%
Except for the family of universality classes
given by the Schramm-Löwner~Evolutions~\cite{schramm-00},
which include $2$-dimensional loop-erased random
walk~\cite{lawler-schramm-werner-04-1} and several other
models~\cite{lawler-schramm-werner-04,smirnov-01,smirnov-10}.
}

% Brownian motion with self-attraction~\cite{?}
% ...
% Reinforced random walks~\cite{davis-90,merkl-rolles-06,pemantle-07}.
% ...
% recurrence
% ...
% pinning to a finite set
% ...
% A recent study~\cite{sabot-tarres-11} of edge-reinforced random walk revealed
% an interesting connection between localization for
% linearly-reinforced random walks
% and Anderson localization kind of phenomena
% for a supersymmetric sigma model in quantum-field
% theory~\cite{disertori-spencer-10,disertori-spencer-zirnbauer-10}.
% ...
% For more background on these topics,
% see~\cite{merkl-rolles-06,mountford-tarres-08,pemantle-07,raimond-schapira-11,
% tarres-11}.
% ...

% In the continuum setup one has
% the
% or attraction~\cite{?},

% and
% random paths with bounded local time~\cite{benjamini-berestycki-10}.

% future evolution of the greedy server's
% position is of course influenced by its previous path.

The greedy server model has two particular features.
Unlike most of the above models, here there is no direct prescription of
how the past trajectory influences the future in terms of occupation times.
Moreover, this evolution is time-inhomogeneous in the sense that customers keep
accumulating, which yields an increasing bias towards least recently explored
regions with decreasing traveled inter-distances after each service.

\subsection{Stochastic evolution of profiles}

To address the issues mentioned in the above subsection, we consider
a representation of the customers environment which
reflects its randomness as perceived by the server.

More precisely, we only want to learn the information that is necessary and
sufficient to determine the next movement, and the positions of further waiting
customers should remain unknown.
Each time the server has to scan the system state to determine the position
of the next target, we acquire exactly two pieces of information: the
presence of a customer at that position and the absence of any other customer
at smaller distances, as illustrated in Figure~\ref{fig:potential}.

The arrivals are represented by a space-time Poisson Point Process $\nu
\subseteq (\RR/\ZZ) \times \RR$, and
in this approach one is ignoring the points of $\nu$ that have not yet
influenced the server's trajectory.
One can think of this scheme as re-sampling the set of waiting customers at each
departure time, according to the appropriate conditional distribution.
The latter is given in terms of the space-time region where the configuration
$\nu$ has not been revealed.
In this setting, the state of the system is given by the positions of the server
and the current customer, plus the profile corresponding to the boundary of
this region where $\nu$ is unknown.
The knowledge of this triplet determines the distribution of its future without
the need of any further information from the past, yielding a Markovian
evolution.
See Figure~\ref{fig:potential}.

\begin{figure}[b!]
\begin{center}
\includegraphics[width=.95\textwidth]{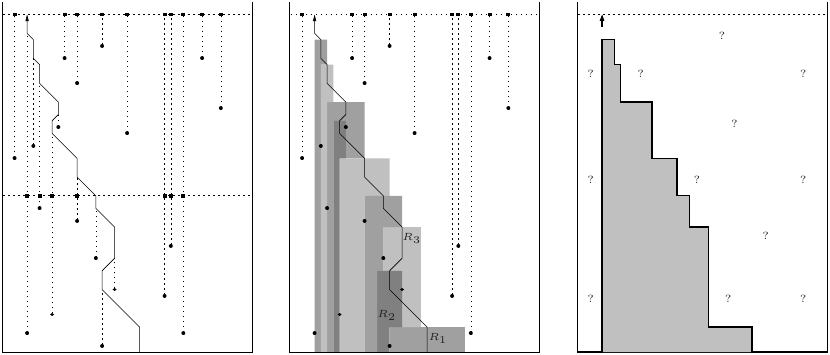}
\caption{\small
On the left, space-time evolution of the greedy server system:
Points of $\nu$ correspond to costumer arrivals and are represented by round
points.
The continuous path represents the server's motion.
For two different times we depicted the set of waiting customers with squared
points.
In the center, for the same evolution, each of the contiguous areas in different
tones of gray correspond to regions where $\nu$ is revealed at the moment when
the server needs to know the new target customer.
Each of these regions contains the point corresponding to such customer at their
lateral boundary, and no other points.
On the right, the profile corresponds to the union of all the regions where
$\nu$ has been revealed up to the time $t$ corresponding to the dotted line.
The configuration $\nu$ outside this region has been erased, since the
server's path up to time $t$ gave no information about this configuration.
The bold arrow indicates the positions of both the server and the current
customer.
}
\label{fig:potential}
\end{center}
\end{figure}

\subsection{Heuristics}
\label{sec:heuristics}

If the server is busy most of the time, the system must be stable, since on
average the service time is smaller than the inter-arrival time.
The fundamental problem in showing stability is therefore the possibility that
the server spend a long time
zigzagging on regions with low density of customers
due to a trapping configuration produced by the stochastic dynamics.

For the analogous model on the real line, this cannot be the case:
the server may zigzag for a finite period of
time, but it is bound to eventually choose a direction and head that
way~\cite{FossRollaSidoravicius15}.

On the same grounds, since the greedy routing mechanism is \emph{local}, this
can neither be the case on the circle -- at least \emph{until the server
realizes that it is not operating on the infinite line}.

Suppose we are given a configuration where the circle is crowded of waiting
customers, and, from this point on, our goal is to alleviate this situation.
We would like to say that, with high probability, after a short time
the server will choose a direction and then cope with its workload as the
polling server would.

There are two situations where the server may feel that it is on
the circle rather than on the line.
First, if it arrives at a given point $x$ for the second time after performing
a whole turn on the circle, it will encounter an environment that has been
affected by its previous visit.
This is not an actual problem, because if it happens it will imply that all the
customers which were initially present will have then been served, and typically
the server will have served more customers than new ones will have arrived.

The second difference is what poses a real issue.
The server has a tendency to go into regions that have been \emph{least recently
visited}, since in these regions the average interdistance between customers is
smaller, and they have bigger chance to attract the server via its greedy
mechanism.
This is indeed how transience is proved on~$\RR$.
Let us call the \emph{age} of a point in space the measurement in time units of
how recently it was visited by the server in the past.
On the line, the age is minimal at the server's position, and \emph{increases as
we go further away from the server}.
The new regions encountered thus become older and older, and the
server surrenders to the fact that the cleared regions it is leaving behind
cannot compete with the old regions ahead.

However, this is not true on the circle: the age profile cannot increase
indefinitely.
This gives rise to the possibility of the following tricky scenario.
Imagine that on a tiny region around some point~$x$
the system is much older than on any close neighborhood.
When the server enters this region, it will take a very long time to
finish with all
the waiting customers.
After finishing with all these customers tightly packed in space, there will no
longer be a strong difference between the ages ahead and behind the server,
who may end up going back to the region that has just been cleared,
invalidating the argument.

We deal with this difficulty by making two key observations.
First, the age of the points on the circle is monotone in some sense: there is
only one local minimum, located at the server's position, and one local
maximum~$x$, and the age increases as we move from the server towards~$x$.
Second, if the above scenario effectively happens and the server changes
direction, the new configuration may become worse in terms of the number of
waiting customers, but will be better in the sense that this sharp peak in
the age profile has
been flattened.
In order to say that the new configuration is ``better'' in this situation, we
need to quantify ``badness,'' taking into account a trade-off between
diminishing the overall workload and leveling this singular region with
excessively high concentration.
This is achieved by considering a  Lyapunov functional that combines the total
number
of customers and the maximum local density.

\subsection{Outline of the paper}

This paper is divided as follows.
% We start with a heuristic discussion in order to highlight
% the main ideas of the proof.
% Next we give some definitions and notation used throughout the text, and
% describe the process evolution.
In Section~\ref{sec:setup}, we give
some definitions and notation used throughout the text, and
describe the process evolution.
% We then introduce the framework of the process viewed from the server.
In Section~\ref{sec:potential}, we
introduce the stochastic evolution of profiles.
In Section~\ref{sec:lyapunov},
we define an observable $B$ that will serve as a Lyapunov
functional, along with a stopping time $\T$ so that $B_\T$ has a
downwards drift, and finally prove Theorem~\ref{thm:stability}.
The proof of downwards drift is given
in Section~\ref{sec:drift} by showing that the
greedy
server behaves most of the time like a polling server via a coupling with a
system on the infinite line, which is done in Section~\ref{sec:polling}.
The latter is studied with a block construction in Section~\ref{sec:block} and
a renewal argument in Section~\ref{sec:renewal}.

% Appendix~\ref{appendix:block}.

\section{Setup and notation}
\label{sec:setup}

The symbol $\preccurlyeq$ means \emph{stochastic domination} between random
elements taking value on the same partially ordered space.
Define $a \wedge b = \min\{a,b\}$, $a \vee b = \max\{a,b\}$, and $[a]^+ =
a\vee 0$.
The \emph{indicator} that $x\in J$ is denoted by $\I_J(x)$, and the
indicator that the system
is in a given state at time $s$ is denoted by $\I_{\mbox{state}}(s)$.
The complement of a set $J$ is denoted by $J^c$ when the space
where we take the complement is clear.

We consider the circle as the quotient space $\RR/\ZZ$, and for $x,y\in\RR$ we write $x\cong y$ if
$x-y\in\ZZ$.
Moreover, we identify classes of $\RR/\ZZ$ with their representants on
$\RR$, and refer to the points or their representants without distinction
unless
mentioned otherwise.
We denote \emph{arcs} on the circle by $[x,y)\subseteq \RR/\ZZ$, given by
the projection of $[x,y)\subseteq \RR$ for any pair of representants
$x,y\in\RR$ such that $x\leqslant y < x+1$.
Analogously for open or closed arcs.
We define the \emph{clockwise distance ${\vec d}$} by ${\vec d}(x,y) =
y-x\in[0,1)$.
In particular, $(x,y]=(y,x]^c$ and ${\vec d}(x,y) = 1- {\vec d}(y,x)$.
The distance on $\RR/\ZZ$ is given by $d(x,y)={\vec d}(x,y)\wedge{\vec d}(y,x)$.
We say that $f$ is \emph{increasing} on $[x,y]\subseteq \RR/\ZZ$ if
$f$ is increasing on any lifting $[x,y]\subseteq \RR$ with $x\leqslant y < x+1$;
analogously for $f$ nonincreasing, nondecreasing, etc.

\paragraph{Evolution of the greedy server system}

The state of the system at time $t$ is described by the triplet
$(\mathscr{C}_t,\S_t,\C_t)$.
Here $\mathscr{C}_t$ denotes the set of customers
present at the system, $\S_t$ denotes the position of the server, $\C_t \in
\mathscr{C}_t$ denotes the position of the customer being served or targeted by
the server, and $\C_t = \mathscr{C}_t = \clock$ when the system is empty.
The process $(\mathscr{C}_t,\S_t,\C_t)_{t \geqslant 0}$ is a strong Markov
process, whose stochastic evolution we describe now.

At all times, $t\mapsto\S_t$ is continuous and
\begin{equation}
\label{eq:movement}
\frac{\dd \S_t}{\dd t}=\V_t
:=
\begin{cases}
0, & \S_t=\C_t \mbox{ or } \C_t = \clock,
\\
v, & \S_t \ne \C_t,\ \vec{d}(\S_t,\C_t) < \vec{d}(\C_t,\S_t),
\\
-v, & \S_t \ne \C_t,\ \vec{d}(\S_t,\C_t) \geqslant \vec{d}(\C_t,\S_t),
\end{cases}
\end{equation}
in the sense of right derivative.

There are three different regimes: \emph{moving} when $\S_t\ne\C_t \in \RR/\ZZ$,
\emph{serving} when $\S_t=\C_t$, and \emph{idle} when $\C_t = \clock$.
While the system is \emph{idle}, $\S$ and $\C$ remain unchanged
until an arrival happens.
While the server is \emph{moving}, the evolution of $\S$
obeys~(\ref{eq:movement}), and $\C$ remains constant.
This regime lasts until \emph{service starts}, i.e., until the time $s$
when $\S_s=\C_s$.
During \emph{service}, the evolution of $\S$ is again given
by~(\ref{eq:movement}), $\C$ also remains constant, and
\emph{service finishes} according to an exponential clock of rate $1$.

The moments when \emph{service finishes} will be called \emph{departure times}.
At departure times $t$, the new regime will be either \emph{moving} or
\emph{idle}.
First, the current customer is removed from the system: $\mathscr{C}_{t} =
\mathscr{C}_{t-} \setminus \{\C_{t-}\}$.
Then $\C_t$ is chosen as the nearest waiting customer, if any:
\begin{equation}
\label{eq:newregime}
\C_t =
\arg \min \{d(\S_t,x):x\in\mathscr{C}_t\}
,
\quad
\mbox{or}
\quad
\C_t=\clock
\mbox{ if }
\mathscr{C}_t=\clock.
\end{equation}

During the whole evolution, \emph{arrivals} happen at rate $\lambda$.
An \emph{arrival} consists of adding to $\mathscr{C}$ a new point $z$ chosen
uniformly at random on $\RR/\ZZ$, i.e.,
$\mathscr{C}_{t}=\mathscr{C}_{t-}\cup\{z\}$.
If $\C_{t-}\ne\clock$, i.e., the server was moving or serving, this is the only
change.
If $\C_{t-}=\clock$, i.e., the system was idle, then also $\C$ is
updated by $\C_t=z$ and a.s.\ the new regime is {moving}.

\section{The process viewed from the server}
\label{sec:potential}

The process $(\mathscr{C}_t,\S_t,\C_t)_{t \geqslant 0}$ may be constructed from
two point processes:
a Poisson Point Process $\nu \subseteq (\RR/\ZZ) \times \RR_+$ with intensity
$\lambda \cdot \dd x \dd t$, each point corresponding to the arrival of a new
customer at position $x$ at time $t$,
and the Poisson Point Process
$\mathscr{T}\subseteq \RR_+$
corresponding to possible departure times (each mark $t\in \mathscr{T}$
effectively corresponds to a departure time if a customer was being served up
to time $t-$ and is ignored if the server was idle or moving).
For $u$ and $w$ denoting functions on $\RR/\ZZ$ or constants, let
\[
\Gamma_u^w = \{(x,s): x\in\RR/\ZZ, u(x) < s \leqslant w(x)\}
\subseteq (\RR/\ZZ) \times \RR_+
.
\]
The $\sigma$-algebra $\F_t = \sigma( \nu_t, \mathscr{T}_t )$, where $\nu_t =
\nu \cap \Gamma_0^t$ and $\mathscr{T}_t = \mathscr{T} \cap [0,t]$, contains all
the information about arrivals and departures up to time $t$, and consequently
about $(\mathscr{C}_s,\S_s,\C_s)_{s\in[0,t]}$.

The process $(\S_t,\C_t)_{t \geqslant 0}$ is not Markovian.
Indeed, the conditional distribution of $(\S_s,\C_s)_{s \geqslant t}$ given $\F_t$
depends on
both $(\S_t,\C_t)$ and $\mathscr{C}_t$.
Yet the only interaction between $(\S,\C)$ and $\mathscr{C}$ is
given by~(\ref{eq:newregime}).
Namely, at each \emph{departure time} $t$, $\mathscr{C}_t$ is
\emph{queried} about the \emph{nearest} waiting customer $\C_t$, if any.
The position of $\C_t$ reveals that $\mathscr{C}_t \cap
[\S_t-z,\S_t+z] = \left\{ \C_t \right\}$, where $z=d(\S_t,\C_t)<\frac{1}{2}$,
and on the other hand it gives no information about the complementary set
$\mathscr{C}_t \cap [\S_t-z,\S_t+z]^c$ of waiting customers.

In the sequel we discuss the conditional distribution of $\mathscr{C}_t$
given $(\S_s,\C_s)_{s \in [0,t]}$, the role played by this conditional law, and
the evolution of this law itself.

\paragraph{Markovianity without $\CC_t$}
By the Markov property of $(\mathscr{C}_t,\S_t,\C_t)_{t \geqslant 0}$ with
respect to $\left\{ \F_t \right\}_{t\geqslant 0}$ we have that the conditional
law of $(\S_s,\C_s)_{s\geqslant t}$ satisfies
\begin{equation}
\nonumber
\LL \left[ (\S_s,\C_s)_{s\geqslant t} \big| \F_t \right]
=
\LL \left[ (\S_s,\C_s)_{s\geqslant t} \big| (\mathscr{C}_t,\S_t,\C_t) \right]
.
\end{equation}
Let $$\G_t=\sigma\left( (\S_s,\C_s)_{s\in [0,t]} \right)\subseteq \F_t.$$
In the sequel we consider the triple $\left(\LL \big. ( \mathscr{C}_t|\G_t
),\S_t,\C_t \right)$ and study its evolution.

By the observations in the previous paragraph, the evolution $(\S_s,\C_s)_{s \in
[0,t]}$ gives information about $\nu\cap (\RR/\ZZ)\times(0,t]$ in a very precise
way.
At each departure time $s$, the new $\C_s$ is chosen
as the point of $\CC_s$ that is closest to $\S_s$.
At these times, $\C_s$ is given by $\S_s\pm z$, where $z$ is the smallest
distance for which there is a point $(\S_s\pm z,s')$ with $s'\in(0,s]$ in
$\nu$, not considering the points that correspond to customers who have already
left the system.
This reveals a rectangle $[\S_s-z,\S_s+z]\times(0,s]$ where $\nu$
has no more points that will participate in the construction of $(\CC_r)_{r>s}$,
and the law of $\CC_s$ outside $[\S_s-z,\S_s+z]$ is not affected.
For times $r$ between $s$ and the next departure time,
$\CC_r$ is given by the union of $\CC_s$ and the Poisson arrivals corresponding
to $\nu\cap (\RR/\ZZ)\times(s,r]$.
For the times $s$ when the system is in the idle state, the revealed rectangle
is the whole $\RR/\ZZ \times(0,s]$.

Iterating this argument, by time $t$ the configuration $\nu$ has been revealed
on the region given by the union of such rectangles.
Since all these rectangles have their base on $t=0$, their union is of the form
$\Gamma_0^{{w}_t}$, where $w_t(x)$ denotes the maximal height among all the
rectangles whose base contains the point $x$.
In other words, the value of $w_t(x)$ is the most recent among:
the departure times $s\in (0,t]$ such that $x\in[\S_s-z,\S_s+z]$;
and the times $s\in (0,t]$ when the system was idle.
The set of waiting customers $\CC_t\setminus \left\{ \C_t \right\}$ is thus
determined by the configuration $\nu$ on the complementary region
$\Gamma_{{w}_t}^t$.
Therefore, the conditional distribution of $\CC_t\setminus \left\{ \C_t
\right\}$ given $\G_t$ is that of an inhomogeneous Poisson process on $\RR/\ZZ$,
with local intensity at each point $x$ given by
\[
\lambda \left[ \big. t-{{w}}_t(x) \right] \dd x.
\]
In summary,
\[
\LL \left[ (\S_s,\C_s)_{s\geqslant t} \Big| \G_t \right]
=
\LL \left[ (\S_s,\C_s)_{s\geqslant t} \Big| \left({w}_t,\S_t,\C_t \right)
\right]
.
\]

Since the evolving region $\left( \Gamma_0^{{w}_t} \right)_{t\geqslant
0}$ is increased at departure times $t$ by adding a rectangle to
$\Gamma_0^{{w}_{t-}}$, this rectangle being in turn determined by $\S_t$
and $\C_t$, we have
\[
\LL \left[ \left({w}_s,\S_s,\C_s \right)_{s\geqslant t} \Big|
\left({w}_s,\S_s,\C_s \right)_{s \in [0,t]}
\right]
=
\LL \left[ \left({w}_s,\S_s,\C_s \right)_{s\geqslant t} \Big|
\left({w}_t,\S_t,\C_t \right)
\right];
\]
i.e., $({w}_t,\S_t,\C_t)$ is a Markov process with respect to its
natural filtration.
In our framework, we shall consider \[u_t = {w}_t - t \leqslant 0\] instead
of ${{w}}$, so that $(u_t,\S_t,\C_t)_{t \geqslant 0}$ is a
time-homogeneous strong Markov process.

\paragraph{Evolution of $(u_t,\S_t,\C_t)$}

The law of the evolution $(u_t,\S_t,\C_t)_{t \geqslant 0}$ is given as follows.
As before, the system may be in one of three regimes, determined by
$(\S_t,\C_t)$.

While \emph{moving} or \emph{serving}, the evolution of $\S$ and $\C$ are given
by the same rules as in the previous section: $\C$ remains constant, $\S$
satisfies~(\ref{eq:movement}), and in the serving regime \emph{service
finishes} at rate $1$.
We no longer have $\mathscr{C}$ to account for the whole set of waiting
customers.
Instead of randomly adding new customers at rate $\lambda$,
this information is now encoded in the function $u(x)$, with the rule
\begin{equation}
\label{eq:potential}
\frac{\dd u_t(x)}{\dd t}=-1 \quad \forall\ x\in\RR/\ZZ
,
\end{equation}
which rather accounts for the time period when new customers have been arriving
to the system at location $\dd x$.

At \emph{departure times},
instead of choosing the nearest point in $\mathscr{C}_t$ as
in~(\ref{eq:newregime}), we take what would be nearest point in a realization of
a Poisson Point Process on $\RR/\ZZ$ with intensity $-u_t(x)\dd x$.
More precisely, at the departure times
the system goes through an instantaneous
random transition, which may lead to either a \emph{moving} or an \emph{idle}
state, as we describe below.
Let $0<E<\oo$ and $0<U<1$ denote exponential and uniform random variables,
independent of each other and of the construction up to time $t-$.
The meaning of $E$ is that the measure of the interval that needs to be explored
before finding a point is exponentially distributed, and $U$ is important in
deciding the position of such point in the boundary of this explored interval.
The total intensity of waiting customers potentially present in the system is
given by
\[
A(u) = \int_{\RR/\ZZ} -\lambda u(x)\dd x.
\]
If $ E \geqslant A(u_{t-}) $,
take $$\C_t=\clock,$$ and the system becomes
\emph{idle}.
Otherwise, let $0<z<\frac{1}{2}$ be the unique number such that
$\int_{\S_t-z}^{\S_t+z}(-\lambda u_{t-})\dd x = E$,
let $a=-u_{t-}(\S_t-z)$, $b=-u_{t-}(\S_t+z)$,
choose
\begin{equation}
\label{eq:newcustomer}
\C_t =
\begin{cases}
\S_t - z ,& U \in (0 , \frac{a}{a+b}),
\\
\S_t + z ,& U \in [\frac{a}{a+b},1),
\end{cases}
\end{equation}
and the new regime is
\emph{moving}.
Finally, take
\begin{equation}
\label{eq:explored}
u_t(x) =
\begin{cases}
u_{t-}(x)\cdot\I_{[\S_t-z,\S_t+z]^c}(x), & E<A(u_{t-}),
\\
0,& E \geqslant A(u_{t-}).
\end{cases}
\end{equation}

The \emph{idle} regime $\C=\clock$ can only be achieved together with $u\equiv
0$ on
$\RR/\ZZ$.
While the system is idle, the state $(u,\S,\C)$ remains unchanged until the
first
customer arrival, which happens according to an exponential clock of
rate $\lambda$.
The \emph{arrival} consists of letting $\C_t=z$, where $z$ is chosen uniformly
on $\RR/\ZZ$.
Immediately after an arrival, a.s.\ the new regime is \emph{moving}.

\paragraph{Framework}

A piecewise continuous, upper semi-continuous function $u(x)\leqslant0$ on
$\RR/\ZZ$ is called a \emph{potential}.
Note that the evolution described above can start from any given potential $u$
and points $\S,\C$ such that $\C\ne\clock$ if $u\not\equiv 0$.
For shortness, the triplet $(u,\S,\C)$ will be denoted by $\U$.
Let $\PP^{\,\U}$ denote the law of $(\U_t)_{t \geqslant 0}$
starting from $\U$ at $t=0$.

We say that $u$ is a \emph{proper potential} if there exist
$x_{\min},x_{\max}\in\RR/\ZZ$ such that $u$ is nondecreasing on
the arc $[x_{\min},x_{\max}]$ and nonincreasing on the
arc $[x_{\max},x_{\min}]$, or if $u$ is monotone on any arc not containing
$x_{\max}$; see Figure~\ref{fig:properpotential}.
\begin{figure}[b!]
\begin{center}
\includegraphics[width=.95\textwidth]{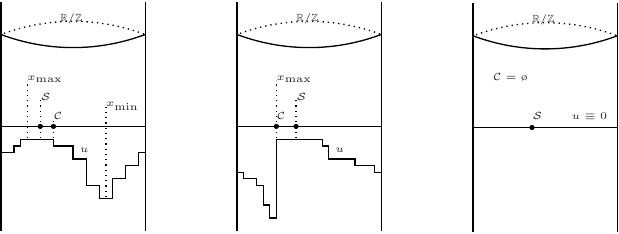}
\caption{Three examples of \emph{proper states}.}
\label{fig:properpotential}
\end{center}
\end{figure}
Given a proper potential $u$, we say that $\U=(u,\S,\C)$ is a \emph{proper state} if either $u(\C)=u(x_{\max})$, or $\C=\clock$ and $u\equiv 0$.

\begin{proposition}
\label{prop:proper}
Starting from a proper state $\U$, $\PP^{\,\U}$-a.s.\ the process
$(\U_t)_{t \geqslant 0}$ remains in proper states for every $t>0$.
\end{proposition}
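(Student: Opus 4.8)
The plan is to exploit the piecewise‑deterministic structure of $(\U_t)_{t\geqslant0}$. Started from $\U$, the trajectory is a concatenation of deterministic pieces — the \emph{moving}, \emph{serving} and \emph{idle} flows governed by~(\ref{eq:movement}) and~(\ref{eq:potential}) — interspersed with isolated random jumps, namely the \emph{departures} governed by~(\ref{eq:newcustomer})--(\ref{eq:explored}) and the arrivals out of the idle regime. Since consecutive departures are separated by an $\mathrm{Exp}(1)$ serving time and each moving stretch lasts at most $\tfrac1{2v}$, there are a.s.\ finitely many jumps in any bounded interval, so it suffices to show that the class of proper states is preserved by each deterministic flow and by each jump, and then to induct on the successive regimes. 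Along the way one records the routine point that $u_t$ stays a potential: a downward translation keeps it piecewise continuous, upper semi‑continuous and $\leqslant0$, and so does the truncation in~(\ref{eq:explored}), the cleared arc being closed so that the truncated function vanishes at its two endpoints while $u_{t-}\leqslant0$ there.

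The deterministic flows are easy. In the moving and serving regimes $u$ is only translated downward by~(\ref{eq:potential}); this leaves the single‑peak shape intact with the \emph{same} $x_{\min},x_{\max}$, and it leaves the maximizer set $\{u=\max u\}$ unchanged. In the serving regime $\S=\C$ is fixed and stays a maximizer, so compatibility persists, and nothing leaves this regime except a departure. In the moving regime $\C$ is fixed with $u_s(\C)=\max u_s$ throughout, and the server travels along the shorter arc joining $\S$ to $\C$; this arc is contained in $\{u=\max u\}$ — trivially when $u$ is spatially constant (a stretch issued from an idle$\to$moving arrival), and otherwise because such a stretch issues from a departure, at which $\{u=\max u\}$ is exactly the cleared arc $[\S-z,\S+z]$ while $\S$ moves from its midpoint to the endpoint $\C=\S\pm z$. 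Hence $\S_s\in\{u_s=\max u_s\}$ throughout and compatibility is preserved. In the idle regime nothing changes and the state is trivially proper, while an arrival out of it keeps $u\equiv0$ and sets $\C$ to a point, for which all values of $u$ equal $0=\max u$, so the result is proper.

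The substantive case is the departure jump out of a proper serving state $\U_{t-}=(u_{t-},\S,\S)$. If $E\geqslant A(u_{t-})$ then~(\ref{eq:explored}) sends the state to $(\mathbf0,\S,\clock)$, which is proper. Otherwise $u_t=u_{t-}\cdot\I_{[\S-z,\S+z]^c}$, $\C_t=\S\pm z$, and the new regime is moving. Since $\S$ is a maximizer of the proper potential $u_{t-}$, we may take $x_{\max}=\S$ in its decomposition, with associated minimum $x_{\min}$. On the cleared arc $[\S-z,\S+z]$ one has $u_t\equiv0\geqslant u_{t-}$, so $\{u_t=\max u_t\}\supseteq[\S-z,\S+z]$, which contains both $\S=\S_t$ and $\C_t=\S\pm z$ and thus already yields compatibility; it remains only to see that gluing this flat top to the untouched part $u_{t-}$ on the complementary arc $[\S+z,\S-z]$ gives a single‑peak profile. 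If $x_{\min}$ still lies in that arc, then $u_{t-}$ there is non‑increasing and then non‑decreasing about $x_{\min}$, so $u_t$ is non‑decreasing on $[x_{\min},\S]$ and non‑increasing on $[\S,x_{\min}]$: again a proper potential with the same $x_{\min}$ and $x_{\max}$ any point of $[\S-z,\S+z]$. If instead the clearing swallows $x_{\min}$, the complementary arc avoids both $x_{\min}$ and $x_{\max}=\S$, so $u_{t-}$ is monotone on it, and recombining the flat top with this monotone piece one again obtains an admissible profile after relocating $x_{\min}$ to a suitable endpoint (possibly a jump point, i.e.\ via the degenerate ``monotone away from $x_{\max}$'' form). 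This swallowed‑minimum sub‑case, together with tracking the values at $\S\pm z$ forced by the closedness of the cleared arc, is the only place where the verification is delicate and is the step I expect to cost the most care; everything else is bookkeeping over the piecewise‑deterministic structure of the first paragraph.
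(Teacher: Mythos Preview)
Your proof is correct and follows the same approach as the paper: verify that each deterministic flow (moving, serving, idle) and each jump (departure, arrival from idle) preserves the proper-state condition. The paper's own proof is a three-sentence sketch of exactly this; you have simply spelled out the case analysis it omits, notably the departure sub-cases according to whether the cleared arc swallows $x_{\min}$.
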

\begin{proof}
When the system is idle, $\U$ is a proper state by definition.
At departure times, according to~(\ref{eq:newcustomer})-(\ref{eq:explored}) the position of $\C$ changes and $u$ is updated by increasing its value to $0$ on an arc containing both the new and the old~$\C$.
This transformation preserves the condition of~$\U$ being a proper state.
When the system state is either moving or serving, $u$ evolves according to~(\ref{eq:potential}), that is, the subtraction of a constant, which also preserves the proper state condition.
\end{proof}

\begin{remark*}
Although $\mathscr{C}_t$ is not determined by
$(\S_t,\C_t)$, we have that $\mathscr{C}_t=\clock$ if and only if $\C_t=\clock$,
and it is thus sufficient to consider the process $(\U_t)_{t \geqslant
0}$
in the study of positive recurrence, defined on page~\pageref{text:recurrent}.
This is the approach used henceforth.
\end{remark*}

\section{Proof of stability}

The goal of this this section is to prove Theorem~\ref{thm:stability}.
In Section~\ref{sec:lyapunov}, we define a Lyapunov functional~$B$ and a stopping
time $\T$.
We then state Proposition~\ref{prop:drift} about the downwards drift of $B$ at
time $\T$ and use it to prove Theorem~\ref{thm:stability}.
In Section~\ref{sec:drift}, we prove Proposition~\ref{prop:drift} making use of
Proposition~\ref{prop:polling}, which states that the total time that the server
spends traveling before time $\T$ is stochastically bounded.
In Section~\ref{sec:polling}, we prove Proposition~\ref{prop:polling} by
coupling the system on the circle with another one on the real line.
In Section~\ref{sec:block}, we show that the latter has positive probability
of being transient (and in fact ballistic) using a block construction.
In Section~\ref{sec:renewal}, we conclude with a renewal argument relying on the
uniform estimates obtained in the block construction.

We spell some formulae for later reference.
\begin{equation}
\label{eq:multidef}
\eta = 1-\lambda
,\qquad
\Psi = 2\eta^{-1}
,\qquad
\epsilon = \frac{\eta\lambda}{8}
,\qquad
\delta = \frac{\epsilon}{2 \Psi}
.
\end{equation}
The reason for these definitions will become clear as they are used in the
proof.

\subsection{Lyapunov functional and stopping times}
\label{sec:lyapunov}

Given a proper potential $u$, let
\begin{align*}
N &= N(u) = \sup_{x\in\RR/\ZZ} -u(x),
\\
B &= B(u) = A(u) + 4\epsilon N(u).
\end{align*}

Notice that the evolution of $u$ is given by~(\ref{eq:potential}) when the
state is moving or serving,
at departure times it jumps upwards according
to~(\ref{eq:explored}),
and it remains constant when the state is idle.
It thus follows that
\begin{equation}
\label{eq:uslow}
u_{t+s} \geqslant u_t - s
\qquad
\forall\ s,t\geqslant 0.
\end{equation}
Since $\lambda + 4\epsilon<1$, it follows from~(\ref{eq:uslow}) that
\begin{equation}
\label{eq:Bslow}
B(u_{t+s}) \leqslant B(u_t)+s
\qquad
\forall\ s,t\geqslant 0.
\end{equation}

Let $B_*$ denote a finite number that will be fixed later.
We claim that, for any proper state $\U$ with $B(u) \leqslant B_*$,
\begin{equation}
\label{eq:easywhenBsmall}
\PP^{\,\U}\left[ \Big. \tau_\clock < \frac{1}{2v} + 1 \right]
\geqslant \left(1-e^{-1}\right) \exp{\Big( -B_*-1-\frac{1}{2v} \Big)}
> 0
.
\end{equation}
To see why the claim is true, consider the event that the server travels towards
the nearest customer $\C$, then finishes service within $T<1$ time unit, and at
this departure time the next state given
by~(\ref{eq:newcustomer})~and~(\ref{eq:explored}) is idle.
When these events hold, since the distance $d=d(\S,\C)$ is at most
$\frac{1}{2}$, this departure time happens at
$t'=\frac{d}{v}+T < \frac{1}{2v} + 1$, implying that
$\tau_\clock < \frac{1}{2v} + 1$.
The first term on the right-hand side corresponds to the probability that $T<1$.
The second term is a lower bound for the the conditional probability that
$\C_{t'}=\clock$ given $t'$, since the latter is given by
$e^{-A(u_{t'})} \geqslant e^{-B(u_{t'})}$ which
by~(\ref{eq:Bslow}) is bounded by $e^{-B(u)-t'}$, proving the claim.

By~(\ref{eq:Bslow}) and~(\ref{eq:easywhenBsmall}), the
proof of Theorem~\ref{thm:stability}
reduces to showing that
\begin{equation}
\label{eq:finiteHittingTime}
\sup \left\{ \Big. \EE^{\U}\left[\tau_{\{B \leqslant B_*\}}\right] :
{\U
\mbox{ proper state, }B(u)<
\bar{B}} \right\} < \infty
\quad
\forall \bar{B}<\oo,
\end{equation}
where $\tau_{\{B \leqslant B_*\}}=\inf\{t:B(u_t)\leqslant B_*\}$.

Let $\U$ be a  proper state such that $B(u) > B_*$.
In the proof of~(\ref{eq:finiteHittingTime}), we study the behavior of
$B(u_t)$ at a particular stopping time $\T$ that is defined below.

Define the sets
\begin{align}
U &= \left\{x \in\RR/\ZZ: u(x) < - \textstyle \frac{N}{2} \right\}
\label{eq:valley}
, \\
G_t &= \left\{ \big. x \in\RR/\ZZ: u_t(x) > -t \right\}
.
\nonumber
\end{align}
Since $u$ is a proper potential, $U$ must be either $\RR/\ZZ$
or an open arc.
Notice that $G_0=\emptyset$ and by~(\ref{eq:uslow}) we have that $G_t$ is
nondecreasing in $t$.
By~(\ref{eq:potential})~and~(\ref{eq:explored}), it may only increase at
departure times $t$, by adding a closed arc containing $\S_t$ and $\C_t$.
Thus $G_t$ is always either $\emptyset$, or all $\RR/\ZZ$, or a closed arc
containing $\S_t$.

We define the following stopping times:
\begin{align}
\T^+ &= \T^+(u) = \Psi B(u),
\nonumber
\\
\T_\circ &= \T_\circ(u) = \inf\{t: G_t \supseteq \RR/\ZZ \},
\nonumber
\\
\T_\curlyvee &= \T_\curlyvee(u) = \inf\{t: G_t \supseteq U \},
\nonumber
\\
\T &= \T(u) = \T_\circ(u) \wedge \T_\curlyvee(u) \wedge \T^+(u)
.
\label{eq:stopping}
\end{align}

It follows from~(\ref{eq:Bslow}) that
\begin{align}
\label{eq:Bbounded}
B(u_{\T}) & \leqslant B(u) + \T \leqslant B(u) + \T^+ = (\Psi + 1) B(u).
\end{align}

A few comments are in order.
Normally, $\T$ is attained because the condition for $\T_\curlyvee$ is attained.
The deterministic time $\T^+$ is a \emph{safety caution}: it bounds the possible
damage that is caused when this condition is not attained in due time.
The presence of $\T_\circ$ in the definition of $\T$ is innocuous from a formal
point of view, since $\T_\circ \geqslant \T_\curlyvee$.
We write it to indicate that $\T_\curlyvee$ may be attained in two conceptually
different situations: either because $U$ is ``crossed'' by
$(G_s)_{s \geqslant 0}$, or
because $U$ is partly taken by $(G_s)_{s \geqslant 0}$ from one direction and
then
from the other, in which case the whole circle $\RR/\ZZ$ is taken.
See Figure~\ref{fig:stopping}.
\begin{figure}[b!]
\begin{center}
\includegraphics[width=.95\textwidth]{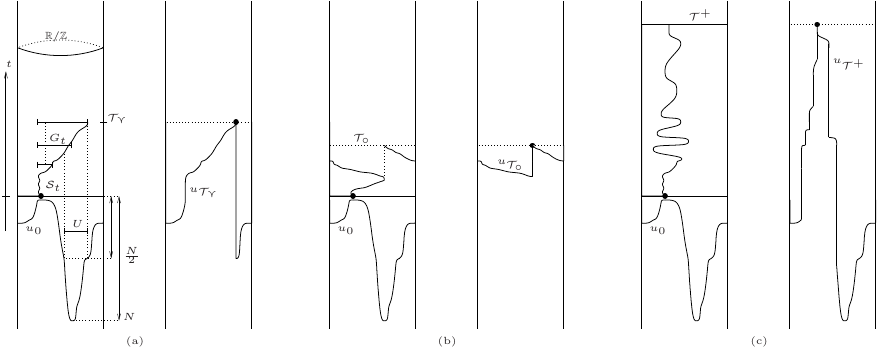}
\caption{Evolution of $G_t$ starting from a given potential $u_0$, with the new
potential $u_{\T}$ at the stopping time $\T$.
From left to right there are three pairs of graphs, each one embedded
in the space-time $(\RR/\ZZ) \times \RR$.
In~(a) we depict a typical example when $\T=\T_\curlyvee$.
In~(b) we show an instance where $\T=\T_\circ$.
Finally, in~(c) there is an example where the server remains confined for a long
time, preventing the condition for $\T_\curlyvee$ to be attained up to time
$\T=\T^+$ .
Each pair represents the system evolution and the resulting potential:
the graph on the left shows the 
parametrized curves $(\S_t,t)_{t\in[0,\T]}$ and $\left(
x,u_0(x) \right)_{x\in\RR/\ZZ}$, and on the right there is
$\left( x,u_\T(x)+\T \right)_{x\in\RR/\ZZ}$ together with the point
$(\S_\T,\T)$.
}
\label{fig:stopping}
\end{center}
\end{figure}

\begin{proposition}[Downwards drift]
\label{prop:drift}
For any proper state $\U$,
\begin{equation}
\label{eq:drift}
\PP^{\,\U} \left( \big. B(u_{\T}) \leqslant (1-\epsilon) B(u) \right)
\geqslant
1-\rho,
\end{equation}
where $\rho=\rho(B(u))$ satisfies $\rho(B)\to 0$ as $B\to\infty$,
and $\epsilon$ is defined in~(\ref{eq:multidef}).
\end{proposition}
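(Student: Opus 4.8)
The plan is to show that with high probability (when $B(u)$ is large), the process $\U_t$ behaves like a polling server up to time $\T$: it picks a direction and sweeps the circle, clearing customers monotonically, so that by time $\T$ the potential $u_\T$ has both a substantially smaller integral $A$ and a smaller (or comparable) maximum $N$. The key input will be Proposition~\ref{prop:polling}, which says the total travel time before $\T$ is stochastically bounded by a fixed-order quantity; granted this, the server spends most of the interval $[0,\T]$ either serving customers or reducing $A$ by sweeping, and each sweep segment erases waiting customers at rate $\approx 1$ (serving time) against arrival rate $\lambda < 1$.

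First I would estimate the decrease of $A$. On the event that $\T = \T_\curlyvee$, the set $G_\T$ covers $U$, meaning every point of $U$ has been revisited by the server in $[0,\T]$ and so has had its potential raised to the current level $-(\T - s)$ at the (most recent) revisit time $s$. Off $U$ the potential was already $\geqslant -N/2$, and after the sweep the potential on $U$ is bounded in terms of how recently each point was swept. Using Proposition~\ref{prop:polling} to control the total travel time, and the fact that the length of time spent serving is, by a law of large numbers / Chernoff estimate, at most roughly $(\lambda + o(1))$ times the number of customers served — which is itself at most $A(u)$ plus new arrivals — I would get that $\T \leqslant \Psi B(u)$ deterministically and that on the good event $A(u_\T) \leqslant (1 - c)A(u) + (\text{lower order})$ for an explicit constant; the constants $\eta, \Psi, \epsilon, \delta$ in~(\ref{eq:multidef}) are tuned precisely so that this works out with the margin $\epsilon$.

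Next I would handle the maximum $N$. This is where the two-term Lyapunov functional earns its keep: if the server does a clean sweep, $N(u_\T)$ is at most of order $\T \leqslant \Psi B(u)$, which is \emph{not} automatically smaller than $N(u)$ — but it is controlled, and the gain in the $A$ term dominates the possible loss of $4\epsilon$ times this controlled $N$. In the tricky scenario described in the heuristics, where the server enters a sharp peak, serves it, and then turns back (so $\T = \T_\circ$, the whole circle covered from both sides), the peak at $x_{\max}$ is flattened: $N(u_\T) \leqslant \T$, and since the server had to spend a long time $\gtrsim N(u)$ serving that peak, we again have good control, and $4\epsilon N(u_\T)$ is more than compensated by the drop in $A$. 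The bound~(\ref{eq:Bbounded}) gives the crude a priori control $B(u_\T) \leqslant (\Psi+1)B(u)$ on the complementary bad event, whose probability $\rho(B(u)) \to 0$ comes directly from the stochastic boundedness in Proposition~\ref{prop:polling} together with a Chernoff bound on the number of arrivals during $[0,\T]$.

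\textbf{Main obstacle.} The hard part will be making the coupling with the polling behaviour precise enough to extract a \emph{quantitative} drift rather than merely "the server eventually sweeps." One must carefully track: (i) that the greedy server, once it has committed to a direction, does not reverse before covering $U$ except with small probability — this is exactly the transience-type statement imported from~\cite{foss-rolla-sidoravicius-11} via Proposition~\ref{prop:polling}; (ii) that the customers arriving \emph{during} the sweep (which the server then also has to serve) do not overwhelm the gain — handled by $\lambda < 1$ and the definition of $\eta$; and (iii) bookkeeping the potential's shape so that "proper state" is preserved (Proposition~\ref{prop:proper}) and the relation between $G_\T \supseteq U$ and the actual pointwise values of $u_\T$ on $U$ is tight. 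Assembling these into the single clean inequality~(\ref{eq:drift}) with the right constant $\epsilon$, and isolating the exceptional event to get $\rho(B)\to 0$, is the crux; everything else is the routine Chernoff/law-of-large-numbers estimates that the constants in~(\ref{eq:multidef}) were chosen to absorb.
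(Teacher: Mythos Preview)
Your high-level strategy matches the paper's: invoke Proposition~\ref{prop:polling} to bound the moving time, use LLN-type fluctuation bounds on the service times $T_n$ and the exploration increments $E_n$, and combine. But the internal logic and the source of the drift are mis-identified in two places, and if you execute as written you will not close the argument.

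First, the order of implications is reversed. You condition on $\T=\T_\curlyvee$ and then estimate $A'$; the paper does the opposite. The identity $A' = A + \lambda\T - \sum_{n\leqslant\N} E_n$ holds \emph{unconditionally}, and together with (\ref{eq:Tbehaveswell})--(\ref{eq:nozigzag}) it gives $0\leqslant A' \leqslant A - \eta\T + 2\delta\Psi B < \eta(\T^+-\T)$. It is this chain that \emph{forces} $\T<\T^+$, hence $U\subseteq G_\T$. You never need to show separately that $\PP(\T=\T^+)$ is small; it is a consequence of the $A'$ bound on the good event.

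Second, you attribute the drift to a multiplicative decrease of $A$, with $N$ a controlled correction. It is the other way around. One does \emph{not} obtain $A'\leqslant(1-c)A$; only $A'-A\leqslant -\eta\T + 2\delta\Psi B$, which can be nearly zero if $\T$ is small. The actual gain comes from $N'\leqslant N/2+\T$ (valid once $U\subseteq G_\T$, since $u_\T>-\T$ on $U$ and $u_\T\geqslant -N/2-\T$ on $U^c$). Then
\[
B'-B \leqslant (-\eta\T+2\delta\Psi B)+4\epsilon(\T-N/2)=(4\epsilon-\eta)\T-2\epsilon N+\epsilon B,
\]
and since $4\epsilon<\eta$ and $N\geqslant B$ (because $B=A+4\epsilon N\leqslant(\lambda+4\epsilon)N<N$), the right-hand side is at most $-\epsilon B$. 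So the $-2\epsilon N$ term from halving $N$ is the engine; the role of the $A$ estimate is (i) to certify $\T<\T^+$ so that the halving is available, and (ii) to contribute $-\eta\T$ to absorb the $+4\epsilon\T$ from the $N'$ bound. Your case split into ``clean sweep'' versus ``tricky scenario'' is unnecessary once you use $N'\leqslant N/2+\T$ uniformly.
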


Writing $D(\cdot) = \log \frac{B(\cdot)}{B_*}$,
(\ref{eq:Bbounded})~and~(\ref{eq:drift})
imply that
\begin{equation}
\label{eq:BoundedDriftD}
\PP^{\,\U}\left( \big. D(u_\T) \leqslant D(u) - \epsilon \right) \geqslant
1-\rho
, \
D(u_\T) \leqslant D(u) + \Psi
, \
\T \leqslant \T^+ = \Psi B_* e^D
.
\end{equation}

We are going to use the following fact, whose proof is omitted.
\begin{lemma}
\label{lemmaHittingTimeTail}
Let $(Y_n)_{n\in\NN}$ be i.i.d.\ Bernoulli random variables with
$$\PP(Y_1=\Psi) = 1-\PP(Y_1=-\epsilon) = \rho.$$
Write $\PP^s$ for the law of $(S_n)_{n\in\NN}$ given by $S_n = s + Y_1 + \cdots
+ Y_n$, and define $\sigma = \inf\{n:S_n \leqslant 0\}$.
Then there exists $\rho_*>0$ such that
$\EE^s[\sigma e^{\Psi\sigma}]<\infty$
for any $\rho \leqslant \rho_*$ and $s<\infty$.
\end{lemma}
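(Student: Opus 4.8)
The plan is to analyze the random walk $(S_n)$ directly, showing that the upward jumps of size $\Psi$ are too rare (when $\rho$ is small) to prevent the walk from reaching $0$, and moreover that the passage time $\sigma$ has the required exponential tail. First I would observe that between consecutive upward jumps the walk decreases deterministically by $\epsilon$ at each step, so the walk is essentially a deterministic descent punctuated by occasional jumps of size $\Psi$; in particular, if the last $k$ steps were all $-\epsilon$-steps, the walk has dropped by $\epsilon k$. The key quantity to control is the number $K_n$ of upward jumps in the first $n$ steps, which has a Binomial$(n,\rho)$ distribution, so $S_n \leqslant s + \Psi K_n - \epsilon(n-K_n) = s - \epsilon n + (\Psi+\epsilon) K_n$.

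The second step is to turn this into a tail bound on $\sigma$. On the event $\{\sigma > n\}$ we have $S_n > 0$, hence $\epsilon n - s < (\Psi+\epsilon) K_n$, i.e. $K_n > \frac{\epsilon n - s}{\Psi+\epsilon}$. For $n$ large (say $n \geqslant n_0(s)$) the right-hand side exceeds $2\rho n$, so $\{\sigma > n\} \subseteq \{K_n > 2\rho n\}$, and a standard Chernoff/large-deviations bound for the Binomial gives $\PP^s(\sigma > n) \leqslant e^{-c n}$ for some $c = c(\rho) > 0$ with the feature that $c(\rho)$ can be made arbitrarily large by taking $\rho$ small — indeed the Binomial large-deviation rate $I_\rho(2\rho) = 2\rho\log 2 - \rho - ((1-2\rho)\log\frac{1-2\rho}{1-\rho})$ behaves like $\rho(2\log 2 - 1) + O(\rho^2) \to 0$, but what matters is the comparison of the rate, normalized against the threshold, to $\Psi$; I would instead fix the threshold fraction $\alpha$ with $\epsilon\alpha > \Psi \cdot (\text{something})$... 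The cleaner route: choose $\rho_*$ so small that for $\rho \leqslant \rho_*$ the exponential decay rate $c(\rho)$ of $\PP^s(\sigma>n)$ exceeds $\Psi + 1$. Then $\EE^s[\sigma e^{\Psi\sigma}] = \sum_n \PP^s(\sigma > n-1)(\text{stuff growing like } n e^{\Psi n})$ converges because $n e^{\Psi n} e^{-c n}$ is summable.

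The main obstacle is getting the constants to line up: I need the decay rate $c(\rho)$ in $\PP^s(\sigma > n) \lesssim e^{-cn}$ to strictly exceed $\Psi$ (not just be positive), and to handle the additive dependence on the starting point $s$ uniformly enough. The resolution is that the threshold fraction $2\rho$ in the Binomial tail can be replaced by any fixed fraction $\alpha \in (2\rho, \epsilon/(\Psi+\epsilon))$ — wait, more carefully: on $\{\sigma>n\}$ we need $K_n/n > \frac{\epsilon - s/n}{\Psi+\epsilon}$, and for $n$ beyond $n_0(s)$ this forces $K_n/n > \beta$ for a fixed $\beta \in (0, \epsilon/(\Psi+\epsilon))$ independent of $s$; the Binomial rate $I_\rho(\beta) \to +\infty$ as $\rho \to 0$ for $\beta$ fixed, since $I_\rho(\beta) \geqslant \beta \log(\beta/(e\rho))$. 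Hence for $\rho$ small enough $I_\rho(\beta) > \Psi + 2$, giving $\PP^s(\sigma > n) \leqslant C(s) e^{-(\Psi+2)n}$. Splitting the sum $\EE^s[\sigma e^{\Psi\sigma}] = \sum_{n\geqslant 1} \sum_{k\geqslant n} (\text{increment})$ — concretely $\EE^s[\sigma e^{\Psi\sigma}] \leqslant \sum_{n\geqslant 0} (n+1)e^{\Psi(n+1)} \PP^s(\sigma > n)$, which is then dominated by $\sum_n (n+1) e^{\Psi(n+1)} C(s) e^{-(\Psi+2)n} < \infty$. The small-$n$ terms $n < n_0(s)$ contribute only finitely much since $\sigma \leqslant $ anything there contributes a bounded quantity, so the whole expectation is finite for every fixed $s < \infty$, completing the argument.
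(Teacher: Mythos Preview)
The paper explicitly omits the proof of this lemma (``whose proof is omitted''), so there is nothing to compare against. Your argument is correct: writing $S_n = s - \epsilon n + (\Psi+\epsilon)K_n$ with $K_n\sim\mathrm{Bin}(n,\rho)$, the event $\{\sigma>n\}$ forces $K_n/n$ to exceed a fixed threshold $\beta\in(0,\epsilon/(\Psi+\epsilon))$ once $n\geqslant n_0(s)$; the Chernoff rate $I_\rho(\beta)\geqslant \beta\log(\beta/(e\rho))\to\infty$ as $\rho\to 0$, so for $\rho\leqslant\rho_*$ one has $\PP^s(\sigma>n)\leqslant e^{-(\Psi+2)n}$ for $n\geqslant n_0(s)$, and summing $(n+1)e^{\Psi(n+1)}\PP^s(\sigma>n)$ gives a finite bound. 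The only cosmetic points are that your displayed inequality $S_n\leqslant s-\epsilon n+(\Psi+\epsilon)K_n$ is in fact an equality, and the exposition would read better without the exploratory detours; the mathematics itself is sound.
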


\begin{proof}[Proof of Theorem~\ref{thm:stability}]
We need to show~(\ref{eq:finiteHittingTime}).
First we use Proposition~\ref{prop:drift} to fix the value of $B_*$ with the
property that $\rho(B) \leqslant \rho_*$ for any $B>B_*$.

Let $\U$ be a proper state such that $B_* < B(u) < \bar{B}$.
We start with $D_0=D(u_0)>0$.
Consider the stopping time
$\T_1 = \T(u_0)$ defined by~(\ref{eq:stopping}) and define $D_1 = D(u_{\T_1})$.
For the shifted process $(\U_{\T_1+t})_{t\geqslant 0}$,
consider the stopping time $\T_2 = \T(u_{\T_1})$ and write $D_2 =
D(u_{\T_1+\T_2})$.
Analogously, once $\T_1,\T_2,\dots,\T_n$ have been constructed,
consider, for the shifted process
$(\U_{\T_1+\T_2+\cdots+\T_n+t})_{t\geqslant 0}$,
the stopping time
$\T_{n+1} = \T(u_{\T_1+\T_2+\cdots+\T_n})$,
and write
$D_{n+1} = D(u_{\T_1+\T_2+\cdots+\T_n+\T_{n+1}})$.
Let $\gamma = \inf\{n:D_n \leqslant 0\}$.

Taking $s=D_0$, it follows
from~(\ref{eq:BoundedDriftD})
that \[(D_{n\wedge \gamma})_{n=0,1,2,\dots} \preccurlyeq ([S_{n\wedge
\sigma}]^+)_{n=0,1,2,\dots},\] whence $\gamma \preccurlyeq \sigma$.
Therefore we get
\begin{align*}
\frac{\tau_{\{B\leqslant B_*\}}}{\Psi B_*}
\leqslant \sum_{n=1}^\gamma \frac{\T_n}{\Psi B_*}
\leqslant
\sum_{n=1}^\gamma e^{D_{n-1}}
\leqslant  \gamma  \exp\left[ \max_{0 \leqslant n < \gamma} D_{n}\right ]
\leqslant  \gamma e^{ D_0 + \Psi \gamma},
\end{align*}
whence by Lemma~\ref{lemmaHittingTimeTail}
\[
\frac{1}{\Psi B_*}{\EE^{\U}\left[\tau_{\{B\leqslant B_*\}}\right]}
\leqslant \EE^{\U} \left[\gamma e^{D_0 + \Psi \gamma}
\right]
\leqslant \EE^{s} \left[\sigma e^{s +\Psi \sigma}\right]
\leqslant \EE^{\bar{s}} \left[\sigma e^{\bar{s} + \Psi \sigma}\right] <
\infty,
\]
where $\bar{s}=\log \frac{\bar{B}}{B_*}$.
\end{proof}

\subsection{Downwards drift}
\label{sec:drift}

Write $A=A(u_0)$, $N=N(u_0)$, $B=B(u_0)$, $A'=A(u_\T)$, $N'=N(u_\T)$,
$B'=B(u_\T)$.
We decompose time in three parts:
\begin{equation}
\label{eq:totaltime}
\T = \mathscr{M} + \mathscr{S} + \mathscr{I}
,
\end{equation}
where
\[
\mathscr{M} = \int_0^{\T} \I_{\mbox{moving}}(s) \dd s
,
\quad
\mathscr{S} = \int_0^{\T} \I_{\mbox{serving}}(s) \dd s
,
\quad
\mathscr{I} = \int_0^{\T} \I_{\mbox{idle}}(s) \dd s.
\]
By definition of $\T_\circ$, the system cannot be idle for any $t<\T$, thus
$ \mathscr{I} = 0$.
For each $t>0$, let $\N_t$ denote the \emph{number of departures times} in
$(0,t]$.
Fix $\N=\N_\T$, the number of customers served up to time $\T$.
The total time spent with services during $(0,\T]$ is given by
$$\mathscr{S} = \sum_{n=1}^{\N} T_n + \beta T_{{\N}+1}$$
for some $0 \leqslant \beta < 1$, where $(T_n)_{n\in\NN}$
are i.i.d.\ exponential random variables.

Writing $A_t=A(u_t)$, it follows from~(\ref{eq:potential}) that $\frac{\dd
A_t}{\dd
t} = \lambda$ for Lebesgue-a.e.\ $t<\T$.
Moreover, $(A_t)_t$ jumps downwards at departure times,
and~(\ref{eq:explored}) reads as
\[
  A_t = \left[ \big. A_{t-} - E \right]^+.
\]
Since $A_t>0$ for all $t<\T$, $A'$ satisfies
\[
  A' = A + \lambda\T - \left( \textstyle \sum_{n=1}^{{\N}-1} E_n + \beta' E_{\N}
\right),
\]
where $0<\beta'\leqslant 1$ and
$(E_n)_{n\in\NN}$ are i.i.d.\ exponential random variables.

We now present the last ingredient, which is proved in the next subsection.
\begin{proposition}[Polling behavior]
\label{prop:polling}
The distribution of $\mathscr{M}$ under $\PP^{\,\U}$ is tight:
\[
\PP^{\,\U} \left\{ \big. \mathscr{M}>t \right\}
\mathop{\longrightarrow}_{t\to\oo} 0
\]
uniformly over all proper states $\U$.
\end{proposition}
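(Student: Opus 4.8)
\emph{Plan.} The idea is to couple the greedy server on the circle with a greedy server on the infinite line and to invoke transience of the latter. Before the circle server completes a full turn -- that is, before the stopping time $\T_\circ$ -- it performs exactly the dynamics of a server on $\RR$, and for such a server the total time spent \emph{moving} before it has explored an interval of length one is tight, uniformly over the starting configuration. Since $\T\leqslant\T_\circ$, this controls $\mathscr{M}$.

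\emph{Coupling.} Let $\pi\colon\RR\to\RR/\ZZ$ be the canonical projection. Given a proper state $\U=(u,\S,\C)$, let $\tilde u$ be the $1$-periodic extension of $u$ to $\RR$, lift $\S$ and $\C$ to $\RR$, and construct on a common probability space the greedy server $(\tilde u_t,\tilde\S_t,\tilde\C_t)_{t\geqslant0}$ on $\RR$ started from $(\tilde u,\S,\C)$, driven by the $1$-periodic lift of the arrival process $\nu$ and by the same departure marks $\mathscr{T}$. A structural remark used repeatedly is that the explored (reset) region of a greedy server is always a single interval: at a departure the reset interval is centred at the just-served customer and reaches the newly targeted customer, so consecutive reset intervals overlap. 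Let $\vartheta$ be the first time the explored region of the line process contains a closed interval of length $1$. Inspecting the evolution rules, as long as that explored interval has length $<1$ the $\tilde u$-mass in any window around $\tilde\S_t$ coincides with the $u$-mass in the corresponding circle window, so the two processes agree on $[0,\vartheta)$ in the sense that $\S_t=\pi(\tilde\S_t)$, $\C_t=\pi(\tilde\C_t)$, the three regimes coincide, and the explored region of the circle process is the $\pi$-image of that of the line process. Since a single interval of length $\geqslant1$ projects onto all of $\RR/\ZZ$, the circle process has $G_t=\RR/\ZZ$ by time $\vartheta$, hence $\T_\circ\leqslant\vartheta$. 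Using $\T\leqslant\T_\circ$ and the agreement on $[0,\T_\circ)\subseteq[0,\vartheta)$ we get
\[
\mathscr{M}=\int_0^{\T}\I_{\mbox{moving}}(s)\,\dd s
\leqslant\int_0^{\T_\circ}\I_{\mbox{moving}}(s)\,\dd s
\leqslant\mathscr{M}_\RR,
\]
where $\mathscr{M}_\RR$ denotes the total time the line server spends moving on $[0,\vartheta]$.

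\emph{Transience input.} It remains to show that $\mathscr{M}_\RR$ is tight, uniformly over all line potentials of the form $\tilde u$ with $u$ a proper potential. Such a $\tilde u$ is a periodic array of valleys, with the server starting at a local maximum; it rolls into an adjacent valley. Deep valleys contribute a large amount of \emph{serving} time but only $O(1/v)$ \emph{moving} time, since the server crosses a region of width at most $1$ in dense, tiny steps; and the possibility of zigzagging between valleys for a long time is precisely what transience of the greedy server on $\RR$ rules out. The required quantitative statement -- that the moving time spent before the explored interval grows past length one is stochastically bounded, uniformly over the starting configuration in this class -- is an adaptation of the transience result of~\cite{foss-rolla-sidoravicius-11}, which we isolate and prove in Appendix~\ref{appendix:block}.

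\emph{Conclusion and main difficulty.} Combining the two steps, $\mathscr{M}\preccurlyeq\mathscr{M}_\RR$ for every proper state $\U$, and therefore $\sup_{\U}\PP^{\,\U}\{\mathscr{M}>t\}\leqslant\sup\PP\{\mathscr{M}_\RR>t\}\to0$ as $t\to\oo$. The main obstacle is the uniform transience estimate behind the tightness of $\mathscr{M}_\RR$: it must accommodate arbitrarily deep initial wells together with the whole periodic array of them, and the uniformity over all proper potentials is delicate -- this is the technical content of Appendix~\ref{appendix:block}. A secondary point requiring care is the coupling bookkeeping, in particular the single-interval property of the explored region, which is what makes ``explored an interval of length one on $\RR$'' equivalent to ``$G_t=\RR/\ZZ$ on the circle'', and hence yields $\T_\circ\leqslant\vartheta$.
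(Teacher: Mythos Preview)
Your overall strategy---couple with the line and invoke transience---is the paper's, but the reduction has a genuine gap. You lift $u$ to its full periodic extension (the paper's $\bar u$) and then ask Appendix~\ref{appendix:block} to control the moving time up to $\vartheta=\T_{[1]}$ for that periodic potential. But Appendix~\ref{appendix:block} establishes Proposition~\ref{prop:transient}, which applies only to \emph{$\alpha$-unimodal} initial states: its multi-scale blocks use that the depth $-\tilde u(x)$ ahead of the server is at least $\alpha$ times the running maximum depth, so that customers ahead stay dense enough to keep pulling the server forward. A periodic extension fails this the moment the server passes the bottom $x_{\min}$ of the valley, since the potential then climbs back toward $0$ at the next copy of $x_{\max}$; the ratio can be made arbitrarily small and the block estimates collapse. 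This is precisely the ``tricky scenario'' flagged in Section~\ref{sec:heuristics}. The statement you defer to the appendix is therefore not the one proved there, and proving it for the periodic array would be genuinely harder, not a routine adaptation.

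The paper closes this gap with one extra step that you omit. It keeps $\bar u$ only on an interval $(l,r)$ with $[l,r]\subseteq[-1,1]$ straddling a single copy of the deep set $U$, and replaces it by the constant $-N/2$ outside; the resulting $\tilde u$ is $\alpha$-unimodal with $\alpha=\tfrac12$, so Proposition~\ref{prop:transient} applies. Correspondingly, one bounds the moving time not up to $\T_\circ$ but up to $\T_\curlyvee=\T_U\wedge\T_{[1]}$, where $\T_U$ is the first exit of the line server from $(l,r)$: before $\T_U$ the $\bar\U$- and $\tilde\U$-evolutions coincide, and after the strongly-transient time $\T_Z$ the server, confined to $[-1,1]$, can travel at most $6$ further before $\T_U$. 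This yields the explicit bound $\mathscr{M}\leqslant v^{-1}(8+2\tilde\N_{\T_Z})$, whose tightness follows from that of $\tilde\N_{\T_Z}$. A smaller separate point: driving the line process by ``the $1$-periodic lift of $\nu$'' does not produce a Poisson environment on $\RR$ (there are period-$1$ spatial correlations), so the resulting process is not the greedy server on the line and the transience result would not apply to it; the correct coupling goes through the common exploration variables $(E_n,U_n,T_n)$, as in Lemma~\ref{lemma:coupling}.
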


\begin{proof}[Proof of Proposition~\ref{prop:drift}]
It follows from Donsker's invariance principle and
from Proposition~\ref{prop:polling}
that
\begin{align}
\label{eq:Tbehaveswell}
\textstyle \sum_{n=1}^{{\N}+1} T_n & < {\N} + \delta B,
\\
\label{eq:Ebehaveswell}
\textstyle \sum_{n=1}^{{\N}-1} E_n & > {\N} - \delta B,
\\
\label{eq:nozigzag}
\mathscr{M} & < \delta\Psi B,
\end{align}
hold with high probability as $B\to\infty$, uniformly in $\U$.

Assume that (\ref{eq:Tbehaveswell}), (\ref{eq:Ebehaveswell}), and
(\ref{eq:nozigzag}) happen.
Putting these altogether yields
\begin{align}
0 \leqslant A' & = A + \lambda \T - \textstyle \sum_{n=1}^{{\N}-1} E_n -
\beta' E_{\N}
\nonumber
\\
& \leqslant A + \lambda \T - {\N} + \delta B
&&
\mbox{by }
(\ref{eq:Ebehaveswell})
\nonumber
\\
& \leqslant A + \lambda \T - \mathscr{S} + 2\delta B
&&
\mbox{by }
(\ref{eq:Tbehaveswell})
\nonumber
\\
& = A + \lambda \T - \T + \mathscr{M} + 2\delta B
&&
\mbox{by }
(\ref{eq:totaltime})
\nonumber
\\
& \leqslant A -\eta \T + \delta\Psi B + 2\delta B
&&
\mbox{by }
(\ref{eq:nozigzag})
\nonumber
\\
& \leqslant A -\eta \T + 2\delta\Psi B
&&
\mbox{since }
\Psi>2
\label{eq:Adecreases}
\\
& <
2B - \eta \T = 
\eta(\T^+ - \T).
&&
\mbox{since }
A < B,\ 2\delta\Psi<1
\nonumber
\end{align}

By the last inequality, we have $\T < \T^+$.
It then follows from the definition of $\T$ that $U \subseteq G_{\T}$, whence
\(
u_{\T}(x) > - \T \mbox{ for } x \in U
.
\)
But by~(\ref{eq:uslow}) and the definition of $U$, we have
\(
u_{\T}(x) \geqslant u_0(x) - \T \geqslant -N/2 - \T \mbox{ for } x \in U^c
.
\)
Therefore,
\[
N' \leqslant N/2 + \T.
\]
Combining this and~(\ref{eq:Adecreases}):
\begin{align*}
B' - B & \leqslant (-\eta \T + 2 \delta \Psi B) + 4(\T -N/2)\epsilon
\\
& \leqslant - \left(2\epsilon - 2 \delta \Psi \right) B
- (\eta-4\epsilon)\T
&&
\mbox{since }
N>B
\\
& \leqslant - \left(2\epsilon - 2 \delta \Psi \right) B
&&
\mbox{since }
4\epsilon < \eta
\\
& = - {\epsilon} B.
&&
\qedhere
\end{align*}
\end{proof}

\subsection{Polling behavior}
\label{sec:polling}

In this section we prove Proposition~\ref{prop:polling} via a coupling with the
greedy server on the real line.
The latter eventually moves towards one of the two directions and spends little
time going backwards, which was shown in~\cite{FossRollaSidoravicius15}.
We consider a periodic extension of the initial potential $u$ on the circle,
and approximate it by another potential with less oscillations, for which we can
generalize this result.

The rules described in Section~\ref{sec:potential} may also be used to construct evolutions on the real line, which we will couple with the greedy server on the circle.

Let us start with an informal description of how these systems are coupled.

First, define an evolution on~$\RR$ by a lifting from~$\RR/\ZZ$: extend the potential periodically and replace both the server and the currently served customer by infinitely many replicas at unit interdistances.
If all the replicas evolve using the same randomness, this system remains periodic for all times.
Moreover, one can recover the original system by projecting back from the line back onto the circle, so it is basically the same system.

Now remove all the replicas and consider the system with a single server.
For short times, this server evolves just like in the system with all the replicas. In fact, this remains true until the first time when the server needs to query for the presence of customers in a region that has already been queried by another replica.
So the systems will uncouple at time~$\T_{[1]}$ defined below, or~$\T_\circ$ for the system on~$\RR/\ZZ$.

Finally, this system with a single server on~$\RR$ can be coupled with another similar system also on~$\RR$, which at $t=0$ starts with the same positions for both the server and served customer, but a slightly different potential.
If the same randomness is used for both systems, the servers will evolve together until the first time when they query for the presence of customers in a region where the potential was initially different.
This time of uncoupling is given by~$\T_{U}$ defined below.
It typically occurs before~$\T_{[1]}$, in which case it will correspond to~$\T_\gamma$ on the circle.

In summary, we can couple the system on the circle with one on the line, and the latter with another one having different initial potential, this double coupling lasting until $\T_{U}\wedge T_{[1]}$.
Figure~\ref{fig:coupling} shows an example where $\T_{U}$ is attained first, and Figure~\ref{fig:stopping}(b) shows an example on the circle where $\T_{[1]}$ would be attained first.

We now make the above description precise.

\paragraph{Coupling with the greedy server on~$\RR$}

A \emph{potential} is a piecewise continuous, upper semi-continuous function
$\bar{u}(x)\leqslant 0$ on~$\RR$ with $\int_\RR -\bar{u}\dd x = \oo$.
The evolution of $(\bar{\U}_t)_{t\geqslant 0}$ is
defined in the same way as
on the circle, i.e.,
satisfying~(\ref{eq:movement}),(\ref{eq:potential}),(\ref{eq:newcustomer}),(\ref
{eq:explored}).

Let $\U$ be a proper state on the circle and $\bar{u}$ the
\emph{periodic extension} of $u$ on $\RR$.
Without loss of generality, in the sequel we assume that $\S=0$.
Take $\bar{\S}=0$ and let $\bar{\C}$ be the only representant of $\C$ in
$[-\frac{1}{2},\frac{1}{2})$.

We define
\begin{equation*}
H_t  = \left\{ \big. x \in\RR: \bar{u}_t(x) > -t \right\}
.
\end{equation*}
By the same arguments as for the greedy server
on $\RR/\ZZ$, $H_t$ is nondecreasing in $t$;
it is empty until the first departure time, after which
it consists of a closed interval containing both $\bar{\S}_t$ and $\bar{\C}_t$.

Define the stopping time
\[
\T_{[1]} = \inf\left\{ t: |H_t| \geqslant 1 \right\}.
\]
For each $t<\T_{[1]}$, we define the map $\pi_t$ that takes each
point $x\in[L(t),L(t)+1)\subseteq \RR$ to its projection $x\in\RR/\ZZ$, where
$L(t)$ is chosen as follows.
If $H_t=\emptyset$, we take $L(t)=-\frac{1}{2}$;
otherwise if $H_t \ne \emptyset$, we take $L(t)=\inf H_t$.
For a function $w:\RR\to\RR$ define $\pi_t w=w\circ \pi_t^{-1}$.

Recall from~(\ref{eq:valley}) that the set $U$ is either the whole circle or an
open arc not containing $\S=0$.
Let
\[
\bar{U}=\left\{ x\in\RR : \bar{u}(x) < \textstyle - \frac{N(u)}{2} \right\}
\]
and take
\[
l=\inf \left( \big. \bar{U}\cap[-1,0] \right)
, \quad
r=\sup \left( \big. \bar{U}\cap[0,1] \right).
\]

Finally, consider another initial state $\tilde{\U}$ given by $\tilde{\S}=\bar{\S}$, $\tilde{\C}=\bar{\C}$, and
\begin{equation}
\label{eq:utilde}
\tilde{u}(x) =
\begin{cases}
\bar{u}(x), & x\in(l,r), \\
-\frac{N}{2}, & \mbox{otherwise}.
\end{cases}
\end{equation}
Define the evolution $(\tilde{\U}_t)_{t\geqslant 0}$
again by the same rules as for $\bar{\U}$,
and consider the stopping time
\[
\T_{U} = \inf\left\{ \big. t: H_t \not\subseteq (l,r)
\right\}.
\]

\begin{lemma}
[Coupling]
\label{lemma:coupling}
The evolutions $(\bar{\U}_t)_{t\geqslant 0}$ and
$(\tilde{\U}_t)_{t\geqslant 0}$ on the line
and $(\U_t)_{t\geqslant 0}$ on the circle
may be constructed on the same probability space, satisfying
\begin{align*}
\T_\circ &= \T_{[1]},
&
\T_\curlyvee &= \T_{U}\wedge T_{[1]},
\\
\U_t &=
\pi_t\big( \bar{\U}_t \big) \mbox{ for all } t<\T_{[1]},
&
\bar{\S}_t &=
\tilde{\S}_t  \mbox{ for all } t<\T_{U}.
\end{align*}
\end{lemma}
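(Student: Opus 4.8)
The plan is to put all three evolutions on one probability space, driven by a single copy of the departure clock $\mathscr{T}$ together with, at its $n$-th point, an independent pair $(E_n,U_n)$ of an exponential and a uniform variable as in the ``viewed from the server'' description. Between departures each of $\U$, $\bar\U$, $\tilde\U$ just tilts its own potential down by~(\ref{eq:potential}) and moves its server by~(\ref{eq:movement}); at the $n$-th departure each applies the transition~(\ref{eq:newcustomer})--(\ref{eq:explored}) with the common marks $(E_n,U_n)$ to its own current state. The whole statement then reduces to two commutation facts, proved by induction on the successive departure times, after which the stopping-time identities are pure bookkeeping with the definitions: first, that $\bar\U$ copies $\tilde\U$ until $H_t$ first leaves $(l,r)$; second, that $\pi_t(\bar\U_t)$ copies $\U_t$ until $H_t$ first reaches length $1$.

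For the first fact, recall that $\bar u$ and $\tilde u$ agree on the open interval $(l,r)$ by~(\ref{eq:utilde}). I would prove inductively that $(\bar u_t,\bar\S_t,\bar\C_t)=(\tilde u_t,\tilde\S_t,\tilde\C_t)$ for all $t\leqslant s$ whenever $s$ is a departure time with $s<\T_U$. Between departures this is immediate. At a departure $s$ the transition~(\ref{eq:newcustomer})--(\ref{eq:explored}) only inspects the potential on the explored arc $[\S_s-z,\S_s+z]$; and since $s<\T_U$ the updated set $H_s$, which is a closed interval containing $\bar\S_s$ and $\bar\C_s$, still lies in $(l,r)$, so the explored arc lies in $(l,r)$ too, where the two potentials coincide. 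Hence both processes read the same data, solve the same equation for $z$, and make the same move. Thus they stay equal, i.e.\ $\bar\U_t=\tilde\U_t$ for $t<\T_U$. They may genuinely decouple at $\T_U$, when the explored arc pokes out of $(l,r)$ into the region where $\bar u\neq\tilde u$.

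For the second fact, recall that $\pi_t$ is a measure-preserving bijection from the length-one window $[L(t),L(t)+1)$ onto $\RR/\ZZ$, with $L(t)=\inf H_t$ (and $L(t)=-\tfrac12$ before the first departure) chosen so that the closed interval $H_t$, of length $|H_t|<1$ for $t<\T_{[1]}$ and containing $\bar\S_t$ and $\bar\C_t$, fits inside the window with room to spare. Under the identification by $\pi_t$, the drift~(\ref{eq:movement}), the tilt~(\ref{eq:potential}) and --- crucially --- the greedy query at a departure all transport correctly: the explored arc has total length $2z<1$, hence does not wrap, so ``the nearest point of an inhomogeneous Poisson process of intensity $-\lambda u_{t-}$'' is the same object on the circle and on the line. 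Inducting on departure times, the one thing to check is that enlarging $H_t$, which can shift $L(t)$ (possibly so that $H_t$ pokes out of the \emph{old} window), does not disturb the identification; it does not, because the value $\pi_t(x)$ for $x\in H_t$ is independent of $L(t)$, and because off $H_t$ the potential is still the tilted periodic extension $\bar u(\cdot)-t=u(\pi(\cdot))-t$, which is insensitive to integer shifts of the window. This yields $\U_t=\pi_t(\bar\U_t)$, and in particular $G_t=\pi_t(H_t)$, for all $t<\T_{[1]}$.

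It remains to read off the identities. Since $\pi_t$ is injective on the window, $|G_t|=|H_t|<1$ for $t<\T_{[1]}$, so $G_t\neq\RR/\ZZ$ and $\T_\circ\geqslant\T_{[1]}$; conversely at $t=\T_{[1]}$ the interval $H_{\T_{[1]}}$ has length $\geqslant1$, so the arc $G_{\T_{[1]}}$ --- gotten by adjoining the projected explored arc to $G_{(\T_{[1]})-}=\pi(H_{(\T_{[1]})-})$ --- is all of $\RR/\ZZ$; hence $\T_\circ=\T_{[1]}$. Because $U\subseteq\RR/\ZZ$ we always have $\T_\curlyvee\leqslant\T_\circ=\T_{[1]}$, so it suffices to see that for $t<\T_{[1]}$ one has $G_t\supseteq U$ iff $H_t\not\subseteq(l,r)$. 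Through $\pi_t$, $G_t\supseteq U$ is $H_t\supseteq\pi_t^{-1}(U)=\bar U\cap[L(t),L(t)+1)$; when $U$ is a proper open arc this preimage is the union of the portions of the (at most two) lifts of $U$ meeting the window, and since $H_t$ is one interval containing the server's position and growing outward, it covers this preimage exactly when it has reached out to $l$ on one side or to $r$ on the other, which is the defining condition of $\T_U$ (the degenerate case $U=\RR/\ZZ$, where $\T_\curlyvee=\T_\circ$, is similar). Combining, $\T_\curlyvee=\T_U\wedge\T_{[1]}$. The step I expect to fight hardest is the third paragraph: a single greedy query may in principle require inspecting the potential arbitrarily far out, and one must argue that nothing outside the moving length-one window is ever actually revealed while the cumulative explored set still has measure below $1$, while simultaneously keeping the sliding of $[L(t),L(t)+1)$ consistent with the identification across departures; the geometry of the last paragraph (which lifts of $U$ lie where, open versus closed endpoints, behaviour exactly at the jump times) is routine but fiddly.
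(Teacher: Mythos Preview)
Your approach is essentially the same as the paper's: drive all three evolutions by a common source of randomness and then verify the stated identities. The paper's proof is in fact much terser than yours---it simply specifies the coupling via a shared i.i.d.\ sequence $(E_n,U_n,T_n)_n$ (equivalent to your clock-plus-marks formulation) and explicitly leaves all verification ``to the reader''---so your induction on departure times and the bookkeeping on $\T_\circ,\T_\curlyvee,\T_U,\T_{[1]}$ are precisely the details the paper omits.
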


\begin{proof}
The coupling given by Lemma~\ref{lemma:coupling} is illustrated in
Figure~\ref{fig:coupling}.
\begin{figure}[b!]
\begin{center}
\includegraphics[width=.95\textwidth]{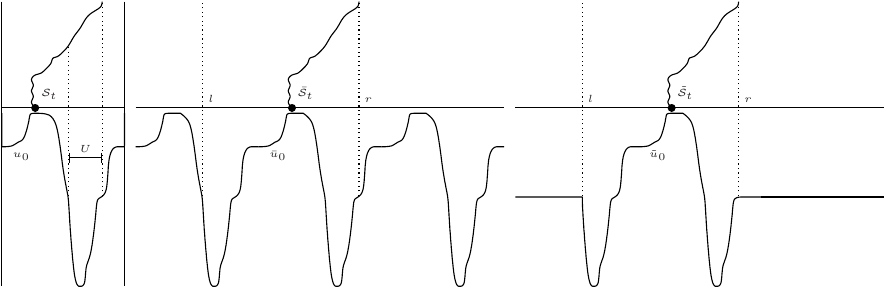}
\caption{Illustration of how $\U$, $\bar{\U}$, and $\tilde{\U}$ evolve
together until time $\T_\curlyvee$.}
\label{fig:coupling}
\end{center}
\end{figure}
The evolution of $(\U_t)_t$ can be constructed using an i.i.d.\
sequence $(E_n,U_n,T_n)_n$, where $E_n$ and $U_n$ are the exponential uniform
used as input for~(\ref{eq:newcustomer}) and~(\ref{eq:explored}) at each
departure time $t_n$, and $T_n$ are the service times prior
to the $n$-th departure time.
(When the system enters the idle state, another clock will be needed to
determine the next arrival time, but this state cannot be achieved before
$\T_\circ$.)

The coupling is simple: we use the same sequence $(E_n,U_n,T_n)_n$ to build
$(\bar{\U}_t)_t$ and
$(\tilde{\U}_t)_t$.
It remains to check that this coupling a.s.\ satisfies the identities
stated in the lemma; the details are left to the reader.
\end{proof}

\paragraph{Strong transience}

For the evolution $(\tilde{\U}_t)_t$,
the total distance traveled by the server between times $t$ and $t'$ is denoted
by
\begin{equation}
\nonumber
V_t^{t'}(\tilde{\S}_\cdot) := \int_t^{t'} |\tilde{\V}_s|\dd s = v \int_t^{t'}
\I_{\mbox{moving}}(s)
\dd s
.
\end{equation}

We say that $(\tilde{\S}_t)_{t \geqslant 0}$ is \emph{transient} if,
for each
$M>0$, $\sup\big\{ t:\tilde{\S}_t \in [-M,M] \big\}<\infty$.
If moreover
\begin{equation}
\label{eq:walkdisplace}
\left| \tilde{\S}_t-\tilde{\S}_0 \right| \geqslant
\frac{1}{3}V_0^t(\tilde{\S}_\cdot)
\qquad
\mbox{for all } t>0
,
\end{equation}
we say that $(\tilde{\S}_t)_{t \geqslant 0}$ is \emph{strongly
transient}.
The latter means that the total displacement must increase linearly with the
traveled distance.

For $0 \leqslant \alpha \leqslant 1$, we say that $\tilde{\U}$
is \emph{$\alpha$-unimodal} if $\tilde{u}$ attains its maximum on $\tilde{\C}$, and
\begin{equation}
\label{eq:alphaunimodal}
\tilde{u}(x) \leqslant \alpha \cdot  \inf_{y\in[\tilde{\C},x]}
\big. \tilde{u}(y)
,
\
\forall\ x > \tilde{\C}
,
\qquad
\tilde{u}(x) \leqslant \alpha \cdot  \inf_{y\in[x,\tilde{\C}]}
\big. \tilde{u}(y)
,
\
\forall\ x < \tilde{\C}
.
\end{equation}
Notice that with $\alpha=1$ this is equivalent to $\tilde{u}$
being nondecreasing on $(-\oo,\tilde{\C}]$ and
nonincreasing on $[\tilde{\C},\oo)$, and for $\alpha<1$ and $\tilde{u}\leqslant 0$, the condition is weaker.

The result below is a consequence of Proposition~1
in~\cite{FossRollaSidoravicius15}, written in our notations.

\begin{proposition}
Given any $\tilde{\U}$ that is $\alpha$-unimodal with
${\alpha=1}$,
$(\tilde{\S}_{t})_{t\geqslant
0}$ is a.s.\ transient.
\end{proposition}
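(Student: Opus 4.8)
The plan is to reduce the statement directly to Proposition~1 of~\cite{foss-rolla-sidoravicius-11}, which concerns the greedy server on the line. The key observation is that, for an $\alpha$-unimodal potential with $\alpha=1$, the function $\tilde u$ is non-decreasing on $(-\oo,\tilde\S]$ and non-increasing on $[\tilde\S,\oo)$, so that the age profile $-\tilde u$ (together with the evolution rules~(\ref{eq:potential}), (\ref{eq:newcustomer}), (\ref{eq:explored})) is exactly of the type treated in that reference: a single ``valley'' at the server's position, with the profile rising monotonically on both sides. Thus the first step is to check that the definition of $\alpha$-unimodality with $\alpha=1$ matches (in our notation) the hypothesis under which transience is established in~\cite{foss-rolla-sidoravicius-11}, i.e.\ that the intensity of waiting customers is an increasing function of the distance to the server on each side.

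The second step is to invoke the dynamical content of that result. Informally, one tracks the leftmost and rightmost points that the server has ever visited, $L_t=\inf H_t$ and $R_t=\sup H_t$ (or equivalently the endpoints of the cleared interval), and argues that at each departure time the server is more likely to be pulled toward the side with the lower age — but since the profile is monotone away from $\tilde\S$, the cleared region keeps ``eating into'' lower and lower values of $-\tilde u$ only on the side that has already been explored less, and once one side has been cleared sufficiently far the age asymmetry across the server becomes permanent and drives $\tilde\S_t$ to $+\oo$ or $-\oo$. The formal statement of transience, $\sup\{t:\tilde\S_t\in[-M,M]\}<\oo$ a.s.\ for every $M$, is precisely the conclusion of Proposition~1 in~\cite{foss-rolla-sidoravicius-11}; here one only needs to verify that the driving randomness $(E_n,U_n,T_n)_n$ in our construction produces the same law as the embedded dynamics there, which is immediate from the description in Section~\ref{sec:potential}.

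The main obstacle is a bookkeeping one rather than a conceptual one: translating between the two formulations. In~\cite{foss-rolla-sidoravicius-11} the model is presented directly in terms of a Poisson field of customers on $\RR$, whereas here we work with the ``viewed from the server'' potential $\tilde u$ and the abstract transitions~(\ref{eq:newcustomer})--(\ref{eq:explored}). One must check that an $\alpha$-unimodal initial potential with $\alpha=1$, when fed through these rules, has the same distribution as the server-viewed process of a genuine greedy server started from an inhomogeneous Poisson customer configuration whose intensity is non-decreasing away from the origin — which is exactly the conditional law derived in Section~\ref{sec:potential}. Once this identification is in place, the proof is a one-line citation. (The more quantitative statement involving $\alpha<1$ and strong transience~(\ref{eq:walkdisplace}) is what will actually be needed for Proposition~\ref{prop:polling}, but for the present statement — plain transience at $\alpha=1$ — no extra work beyond this translation is required.)
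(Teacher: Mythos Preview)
Your proposal is correct and matches the paper's own treatment: the paper does not give a proof of this proposition at all, but simply states it as a restatement, in the present notation, of Proposition~1 in~\cite{foss-rolla-sidoravicius-11}. Your plan --- identify $\alpha$-unimodality with $\alpha=1$ as the monotonicity hypothesis used there, check that the potential formulation $(E_n,U_n,T_n)$ reproduces the law of the server-viewed process on $\RR$, and then cite --- is exactly that translation spelled out.
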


In order to prove Proposition~\ref{prop:polling},
we shall obtain a slightly stronger result:
\begin{proposition}
\label{prop:transient}
Given any $\tilde{\U}$ that is $\alpha$-unimodal with
$\alpha=\frac{1}{2}$, there exists a random time $\T_Z$ satisfying
$\PP^{\,\tilde{\U}} (\T _Z < \infty) = 1$, and such that
$(\tilde{\S}_{\T_Z+t})_{t\geqslant
0}$ is strongly transient.
Moreover, the number of departure times $\tilde{\N}_{\T_Z}$ before $\T_Z$
is tight:
\[
\PP^{\,\tilde{\U}} \left\{ \big. \tilde{\N}_{\T_Z} > k
\right\} \mathop{\longrightarrow}_{k\to\oo} 0
\]
uniformly over all $\alpha$-unimodal $\tilde{\U}$.
\end{proposition}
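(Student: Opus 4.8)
The plan is to bootstrap from the qualitative transience statement (the unnumbered Proposition immediately preceding, which is the translation of Proposition~1 of~\cite{foss-rolla-sidoravicius-11}) to the quantitative \emph{strong} transience with a tight number of departures. First I would observe that the hypothesis $\alpha=\frac12$ is strictly stronger than $\alpha=1$, so the qualitative result applies directly: for $\tilde{\U}$ that is $\tfrac12$-unimodal, $(\tilde{\S}_t)_{t\geqslant 0}$ is a.s.\ transient, hence a.s.\ it eventually commits to one direction, say the right, and $\sup\{t:\tilde{\S}_t\in[-M,M]\}<\infty$ for every $M$. The role of the sharper constant $\alpha=\tfrac12$ is to give a quantitative margin: once the server is, say, at the right edge of the explored region and the potential ahead is everywhere at most one-half of the running infimum of what lies between the server and that point, a single service-and-explore step has a uniformly positive chance of producing a new target to the right rather than to the left, and moreover after such a step the configuration \emph{viewed from the new server position} is again $\alpha'$-unimodal for some $\alpha'\in[\tfrac12,1)$ controlled away from $1$. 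The idea is to define $\T_Z$ as the first departure time after which the server never again moves left past its current position; transience guarantees $\T_Z<\infty$ a.s., and the strong-transience inequality~(\ref{eq:walkdisplace}) with constant $\tfrac13$ is then a book-keeping statement about the post-$\T_Z$ trajectory, which only ever explores fresh ground to the right.

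Concretely, the steps in order: (i) record that $\tfrac12$-unimodality is preserved, with a possibly worse but still bounded-below-$1$ modality constant, under the instantaneous transition~(\ref{eq:newcustomer})--(\ref{eq:explored}) followed by a stretch of motion governed by~(\ref{eq:potential}); this is where the $\alpha=\tfrac12$ hypothesis is spent, since the explored arc raises $\tilde u$ to $0$ on a centered interval and the flanking values $a,b$ satisfy the unimodality bound; (ii) from the qualitative Proposition, the total variation $V_0^{\infty}(\tilde{\S}_\cdot)$ need not be finite, but the amount of \emph{backtracking} is a.s.\ finite, so define $\T_Z=\inf\{t: \tilde{\S}_s\geqslant \tilde{\S}_t \text{ for all } s\geqslant t\}$ (or the symmetric left-version on the complementary event), which is a.s.\ finite and is a stopping time for the natural filtration up to a right/left choice; (iii) verify~(\ref{eq:walkdisplace}) for the shifted process: after $\T_Z$ the server is monotone in its committed direction at the level of "never returning", and standard ladder-epoch estimates — the server moves right at speed $v$ between departures and the leftward excursions within each inter-departure stretch are dominated by a geometric number of geometrically small steps because of the $\alpha$-margin — give that displacement is at least a fixed fraction of traveled distance; the constant $\tfrac13$ is comfortable slack; (iv) for tightness of $\tilde{\N}_{\T_Z}$, note that at each departure time there is a probability bounded below, uniformly over all $\alpha$-unimodal states with $\alpha=\tfrac12$, that the server commits rightward forever from that point on — this uniform lower bound comes from a Borel--Cantelli / renewal-type comparison with an i.i.d.\ sequence of "commit" trials built from the $(E_n,U_n,T_n)$ inputs — so $\tilde{\N}_{\T_Z}$ is stochastically dominated by a geometric random variable with parameter independent of $\tilde{\U}$, whence the uniform tail bound.

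The main obstacle I expect is step (iv), and more precisely locating a \emph{genuinely uniform} lower bound for the single-step "commit forever" probability. Transience as stated in~\cite{foss-rolla-sidoravicius-11} is an asymptotic, almost-sure statement and does not by itself come with quantitative control that is uniform over the (infinite-dimensional) space of $\alpha$-unimodal initial potentials; one cannot simply invoke compactness. The resolution I would pursue is to isolate, inside the argument for transience in~\cite{foss-rolla-sidoravicius-11}, a \emph{finite} comparison event — e.g., the server's next $O(1)$ departures all produce right-targets and by then the explored region extends far enough right that the $\alpha=\tfrac12$ margin makes the future drift to the right dominate a biased random walk with bias bounded away from zero — whose probability can be estimated by hand from the explicit transition law~(\ref{eq:newcustomer})--(\ref{eq:explored}), using only that $\tilde u\leqslant -\tfrac12\cdot(\text{running infimum})$ ahead of the server; the a.s.\ transience then upgrades this finite margin to "commit forever" via a standard Lyapunov/submartingale argument on $\tilde{\S}_t$ minus a multiple of $V_0^t$. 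The technical content of carrying this out cleanly is exactly what the paper defers to Appendix~\ref{appendix:block}; at the level of this section the proposition should be stated and its consequences (Proposition~\ref{prop:polling}) derived, with the block-estimate lemma cited.
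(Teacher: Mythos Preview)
Your high-level renewal strategy matches the paper's, but two of your steps understate the real work and, as written, do not go through.

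\textbf{Step (iii) is not book-keeping.} You define $\T_Z$ as the first time after which the server never returns below its current position, and then assert that $|\tilde{\S}_{\T_Z+t}-\tilde{\S}_{\T_Z}|\geqslant \tfrac13 V_{\T_Z}^{\T_Z+t}$ follows from ``ladder-epoch estimates.'' But never returning below $\tilde{\S}_{\T_Z}$ does not bound the ratio of backtracking to travelled distance: the server can make long rightward excursions and retrace most of them while staying above $\tilde{\S}_{\T_Z}$, violating~(\ref{eq:walkdisplace}). The $\alpha=\tfrac12$ margin does make such behaviour unlikely, but quantifying this is precisely what the multi-scale block construction of~\cite{foss-rolla-sidoravicius-11} is for. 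The paper builds stopping times $L_0<L_1<\cdots$ and success events $A_j$ such that on $\cap_j A_j$ the per-block travel satisfies $V_{L_j}^{L_{j+1}}(\S_\cdot)\leqslant X_j+\tfrac{4D_j}{N_j}$ with $\tfrac{4D_j}{N_j}\leqslant \tfrac23 X_{j-1}^-$; summing these block bounds yields $V_0^t(\S_\cdot)\leqslant\tfrac53\S_t$, hence~(\ref{eq:walkdisplace}). The constant $\tfrac13$ comes from these explicit estimates, not from a generic ladder argument.

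\textbf{Step (iv) has a filtration problem.} You say that at each departure time the conditional probability of ``commit forever'' is at least some $p>0$ and conclude that $\tilde{\N}_{\T_Z}$ is dominated by a geometric. But ``commit forever from $t_n$'' is not measurable with respect to the filtration at any finite time, so the successive ``trials'' are not adapted coin flips and the geometric domination does not follow. The paper's fix is structural: failure of the $j$-th block $A_j$ \emph{is} detectable at the stopping time $L_{j+1}$, so one can restart the whole block construction from $L_{J^0}$ (the state there is again $\alpha$-unimodal). To make the failure index $J^k$ identically distributed across restarts---which is needed for a distribution-free upper bound---the paper inserts an auxiliary coin toss forcing $\PP(\tilde A_j\,|\,\U_{L_j})=p_j$ exactly rather than $\geqslant p_j$. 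One then gets $\tilde{\N}_{\T_Z}\leqslant\sum_{k=0}^{K}Q^+(J^k)$, a geometric-in-$K$ sum of i.i.d.\ terms whose law is independent of $\tilde{\U}$. This is the source of the uniform tightness; it is not a single geometric variable, and it does not come for free from the qualitative transience statement.

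In short, both the strong-transience constant and the tightness genuinely require the block construction; your outline correctly locates the difficulty but replaces the mechanism that delivers the quantitative control with heuristics that do not carry the argument.
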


Proposition~\ref{prop:transient} is proved in
Sections~\ref{sec:block}~and~\ref{sec:renewal}
by
adapting
the multi-scale construction of~\cite{FossRollaSidoravicius15} to the case of
$\alpha$-unimodal initial states.

\begin{proof}
[Proof of Proposition~\ref{prop:polling}]
We first observe that $\tilde{\U}$, with $\tilde{u}$ defined
by~(\ref{eq:utilde}), is
$\alpha$-unimodal for $\alpha=\frac{1}{2}$.
By definition of $\mathscr{M}$ and $V$,
\[
\mathscr{M} = \frac{1}{v}\ V_0^\T(\S_\cdot)
\leqslant
\frac{1}{v}\ V_0^{\T_\curlyvee}(\S_\cdot)
\]
and by Lemma~\ref{lemma:coupling},
\[
V_0^{\T_\curlyvee}(\S_\cdot)
= V_0^{\T_{[1]}\wedge \T_{U}}(\bar{\S}_\cdot)
= V_0^{\T_{[1]}\wedge \T_{U}}(\tilde{\S}_\cdot)
\leqslant V_0^{\T_{U}}(\tilde{\S}_\cdot)
.
\]

By definition of $\T_U$, we have that $\tilde{\S}_t \in [l,r] \subseteq
[-1,1]$ for all $t<\T_U$.
The distance traveled by the server between
consecutive departure times is thus bounded by $2$, and therefore
\[
V_0^{\T_{Z} \wedge \T_U}(\tilde{\S}_\cdot)
\leqslant
2 (\tilde{\N}_{\T_Z \wedge \T_U}+1)
\leqslant
2 (\tilde{\N}_{\T_Z}+1)
.
\]
In case $\T_U \leqslant \T_Z$, this upper bound for
$V_0^{\T_{U}}(\tilde{\S}_\cdot)$ is good enough.
So consider the case $\T_Z<\T_U$ and write
\[
V_0^{\T_{U}}(\tilde{\S}_\cdot)
=
V_0^{\T_{Z}}(\tilde{\S}_\cdot)
+
V_{\T_{Z}}^{\T_{U}}(\tilde{\S}_\cdot).
\]
By~(\ref{eq:walkdisplace}) and the definition of $\T_Z$,
\[
V_{\T_{Z}}^{\T_{U}}(\tilde{\S}_\cdot)
\leqslant
6
.
\]
Summarizing,
\[
\mathscr{M} \leqslant \frac{1}{v} \left( 8 + 2 \tilde{\N}_{\T_Z} \right)
\]
and the result then follows from Proposition~\ref{prop:transient}.
\end{proof}

% \appendix
\subsection{Multi-scale estimates}
% \label{appendix:block}
\label{sec:block}

In the remainder of this section we give a short but self-contained proof of
Proposition~\ref{prop:transient} using a block argument.
The reader will find a similar construction, with a
more extended explanation of the main ideas,
in~\cite{FossRollaSidoravicius15}.
Since only $\tilde{\U}$ is concerned, we shorten notation and write $\U$
instead.
Each time $C$ or $c$ appears, it denotes a different constant that is positive,
finite, and depends only on $v$.

\medskip

% \paragraph{Multi-scale construction}
Let $\A_t=\sigma( (\U_s)_{s\in[0,t]} )$ denote the natural filtration for
$(\U_t)_{t\geqslant 0}$.
We construct a sequence of stopping times $0=L_0<L_1<\cdots$
and define the corresponding events of success $A_j \in \A_{L_{j+1}}$ in
terms of $\U_{L_j}$.
The construction will have the following properties.
For some sequence $p_j$ and any $\U$ that is $\alpha$-unimodal,
\begin{equation}
\label{eq:blockestimateprobab}
\PP^{\,\U} ( A_{j} | \A_{L_j} )
=
\PP^{\,\U} ( A_{j} | \U_{L_j} )
\geqslant p_j
\mbox{ on }
A_0 \cap \cdots \cap A_{j-1},
\quad
\mbox{and}
\quad
\prod_j p_j > 0
.
\end{equation}
The event $\cap_{j=0}^\infty A_j$ implies strong transience of
$({\S}_t)_{t\geqslant 0}$.
Almost surely, for each $j=0,1,2,\dots$, the state of $\U_{L_j}$ is
\emph{serving}.

\medskip

We assume without loss of generality that the state of $\U_0$ is
serving, that $\lambda=1$, and that $\S_0=0$.
Take $\sigma = \sgn \S_{L_1}$ to indicate the direction in which subsequent
blocks are supposed to grow.
Let
$Z_j=\sigma \S_{L_j}$,
$N_j = L_j-u(\S_{L_j})$,
$Q_j = \N_{L_{j+1}}-\N_{L_j}$,
$X_{j} = Z_{j+1} - Z_{j}$,
$M_j=L_{j+1}-L_j$.

The triggering step $j=0$ is defined as follows.
We always take $Q_0=1$, and the first step consists of finishing with
the customer that is being served at time $L_{0}$, then traveling towards the
nearest customer at position $\sigma Z_1$, and $L_1$ is the stopping time
attained as soon as the server reaches this position.
The event $A_0$ means \emph{success} at the Step~$j=0$, and is defined by the
following conditions:
$X_0^- \leqslant X_0 \leqslant X_0^+$
and
$M_0^-  \leqslant M_0 \leqslant M_0^+$,
where
$
X_0^- = \frac{9}{N_1},
$
$
X_0^+ = 36,
$
$
M_0^- = 1,
$
and
$
M_0^+ = 2 + \frac{36}{v}.
$
If there is no success, we declare Step~$0$ to have \emph{failed} and stop.
In the sequel we assume without loss of generality that $\sigma=+1$.

For $j\geqslant 1$, suppose that Steps~$0,1,2,\dots,j-1$ have been successful
and start from $u_{L_j}$, and take $\ell_j = \lceil 54j^{1/4} \rceil$, $D_j =
\frac{1}{36}\ell_j$.
Let $s_{j,0}=S_{L_j}$ and $s_{j,1},s_{j,2},\dots,s_{j,\ell_j},s_{j,\ell_j+1}$
denote the positions of the next $\ell_j+1$ customers.
Step~$j$ may be successful, which is denoted by the event $A_j$, in two
situations:
first, if $s_{j,n}>s_{j,n-1}$ for $n=1,\dots,\ell_j$,
in which case we take $Q_j=\ell_j$;
second, if there is only one $\tilde n\in \{1,\dots,\ell_j+1\}$ such that
$\tilde{n} \ne \ell_j+1$ and $s_{j,\tilde{n}}<s_{j,\tilde{n}-1}$,
in which case we take $Q_j=\ell_j+1$.
If none of these two happen, we declare Step~$j$ to have \emph{failed} and stop.
Otherwise, in either of the above two cases we say that Step~$j$ is
\emph{successful} if moreover
\begin{align}
% \label{eq:groupbehaviorcircle}
\nonumber
\begin{cases}
Q_j^- \leqslant Q_j \leqslant Q_j^+
,
\quad
X_j^- \leqslant X_j \leqslant X_j^+
,
\quad
M_j^- \leqslant M_j \leqslant M_j^+
,
\\
V_{L_j}^{L_{j+1}}(\S_\cdot) \leqslant X_j + \frac{4 D_j}{N_j}
,
\quad
Z_{j-1} < \S_t < Z_{j+1}
\mbox{ for }
L_j \leqslant t < L_{j+1}
,
\end{cases}
\end{align}
where
$Q_j^- = \ell_j,
Q_j^+ = \ell_j+1,
X_j^- = \frac{\ell_j-1}{3N_{j+1}},
X_j^+ = \frac{3 \ell_j}{N_j},
M_j^- = \frac{1}{2} Q_j^-,
M_j^+ = 2 Q_j^+ + \frac{3 X_j^+}{v}.
$
Here the time $L_{j+1}$ is given by the instant when the server reaches the
last customer, located at $Z_{j+1}$, and the next block starts with this
customer being served.

\medskip

We now estimate the probability of success $\PP^{\,\U}(A_j|\U_{L_j})$ on
$A_0 \cap \cdots \cap A_{j-1}$
by considering a number of events that imply $A_j$.

First notice that $A_0 \cap \cdots \cap A_{j-1}$ implies that
$
M_j^- \geqslant C j^{1/4},
N_j = -u(\sigma Z_j)+L_j \geqslant L_j \geqslant M_0^-+\cdots+M_{j-1}^-
\geqslant C j^{5/4},
X_j^+ \leqslant C j^{-1},
X_{j-1}^- \leqslant C j^{-1},
$
and thus
$
M_j^+ \leqslant C j^{1/4}.
$
Moreover, the condition of~$\U_t$ being $\alpha$-unimodal is preserved for all~$t$.
(Indeed, while the system state is serving or moving, by~(\ref{eq:potential}) the potential $u \leqslant 0$ changes by the subtraction of a constant that increases with time, and this preserves~(\ref{eq:alphaunimodal}). At departure times, the state is updated by~(\ref{eq:newcustomer})-(\ref{eq:explored}), which changes the position of $\C$ and increases the value of~$u$ to~$0$ on an interval that contains both the old and new~$\C$. This increases the $\inf$ in~(\ref{eq:alphaunimodal}), so this inequality still holds for $x$ outside of such interval, whereas for $x$ in such interval both $u(x)$ and the $\inf$ become $0$.)
Thus, for $j \geqslant 1$, the event $A_{j-1}$ implies the following conditions on
$\U_{L_j}$:
\begin{align}
\label{eq:plateau}
% \nonumber
\begin{cases}
u_{L_{j}}(x) = u_0(x)-L_j \leqslant - \frac{N_j}{2}
&
\mbox{for }
x > Z_{j},
\\
u_{L_{j}}(x) \geqslant - M_{j-1}
&
\mbox{for }
Z_{j} - X_{j-1}^- < x < Z_{j}.
\end{cases}
\end{align}

Let $T_0,T_1,\dots,T_{\ell_j}$ denote the service times of the customer being
served at time $L_0$ and the following $\ell_j$ customers.
Let $E_1,U_1,E_2,U_2,\dots,E_{\ell_j},U_{\ell_j},E_{\ell_j+1},U_{\ell_j+1}$ be
the exponential and uniform random variables used for determining the positions
of $s_{j,1},s_{j,\ell_j+1}$ via~(\ref{eq:newcustomer}).

For $j=0$, consider the event that $1<T_0<2$, $36<E_1<72$, and that $U_1$ lies
on the largest interval among $(0 ,\frac{a}{a+b})$ and $[\frac{a}{a+b},1)$.
The probability that these conditions are satisfied is at least
$p_0 = \frac{1}{2}(e^{-36}-e^{-72})(e^{-1}-e^{-2}) > 0$.
The requirement for $U_1$ implies that $u_0(S_{L_1}) \leqslant u_0(-S_{L_1})$.
Hence, by $\alpha$-unimodality of $u_0$, the occurrence of
the above events imply
\[
36
<
\int_{-z}^{+z} [-u_0(x)+T_0] \dd x
\leqslant
\int_{-z}^{+z} \max_{[{-z},{+z}]}(T_0 -u_0) \dd x
\leqslant
- 4 X_0 [u_0(S_{L_1}) + T_0]
\leqslant
4 X_0 N_1
\]
and
\[
72
>
\int_{-z}^{+z} [-u_0(x)+T_0] \dd x
\geqslant
\int_{-z}^{+z} T_0 \dd x
\geqslant
2 z
,
\]
which in turn imply the bounds on $X_0$ and $M_0$,
therefore $\PP^u(A_0) \geqslant p_0 > 0$.

For $j\geqslant 1$, we observe that the positions $s_{j,1},s_{j,2},\dots
s_{j,\ell_j+1}$ can be sampled via a Poisson Point Process on
the region
$$R=\left\{ (x, t)\in \RR^2 : Z_j - X_{j-1}^- \leqslant x \leqslant Z_j
+ X_j^+, u_{L_j}(x) < t \leqslant M_j^+ \right\},$$ unless the elapsed time $M_j$
exceeds $M_j^+$ or the exploration for customers leaves the interval
$[Z_j-X_{j-1}^-,Z_j+X_j^+]$, which is ruled out a posteriori in case Step~$j$ is
successful.
This region $R$ can be decomposed in a disjoint union $R_1 \cup R_2$, where
$R_1 = \left\{ (x, t)\in \RR^2 : Z_j \leqslant x \leqslant Z_j + X_j^+,
u_{L_j}(x) < t \leqslant 0 \right\}$
and
$R_2 \subseteq [Z_j-X_{j-1}^-,Z_j] \times [-M_{j-1}^+,M_j^+]
\cup [Z_j,Z_j+X_j^+] \times [0,M_j^+]$.

The inequalities in~(\ref{eq:plateau}) and those preceding it imply that
$$|R_2| \leqslant (X_{j-1}^- + X_j^+)(M_{j-1}^+ + M_{j}^+) \leqslant C j^{-3/4}.$$
The probability that there are two or more points in $R_2$ is thus bounded by
$Cj^{-3/2}$.

Define $\beta(x) = \int_{Z_j}^{x} [-u_{L_{j}}(z)] \dd z$, $x \geqslant Z_j$,
and write $(x_1,t_1),(x_2,t_2),\dots$ the set of points found in $R_1$,
labeled by ordering $x_1<x_2<x_3<\cdots$.
Writing $x_0 = Z_j$, we have that $\left( \beta(x_n)-\beta(x_{n-1})
\right)_{n=1,\dots,\ell_j}$ are i.i.d. exponential random variables.
With positive probability, and tending to $1$ exponentially fast in $\ell_j$ as
$j\to\infty$, both events
\[
\frac{2}{3}(\ell_j-1) \leqslant \beta(x_{\ell_j-1}) \leqslant \beta(x_{\ell_j})
\leqslant \frac{3}{2}\ell_j
\]
and
\[
\beta(x_n)-\beta(x_{n-1}) \leqslant D_j \quad \mbox{ for } n=1,2,\dots,\ell_j
\]
occur.
But since $\U_{L_j}$ is $\alpha$-unimodal we have
\[
\frac{1}{2}N_j
\leqslant
\frac{\beta(x_{n})-\beta(x_{n-1})}{x_n - x_{n-1}}
\leqslant
2 N_{j+1}
.
\]
The above inequalities imply that
\(
X_j^- \leqslant x_{\ell_j-1}-Z_j \leqslant x_{\ell_j}-Z_j \leqslant X_j^+
\)
and
\(
x_{n}-x_{n-1} \leqslant \frac{2 D_j}{N_j} = \frac{\ell_j}{18 N_j} \leqslant
\frac{X_{j-1}^-}{3}
.
\)
On the event that there are no points in $R_2$, we have
$V_{L_j}^{L_{j+1}} = X_j = x_{\ell_j} - Z_j$.
On the event that there is one point in $R_2$, we have
$x_{\ell_j-1} - Z_j \leqslant X_j \leqslant x_{\ell_j} - Z_j$ and
$V_{L_j}^{L_{j+1}} \leqslant X_j + 4\frac{D_j}{N_j}$.
This finishes the bounds on $Q_j$, $X_j$,
$V_{L_j}^{L_{j+1}}$, and $(\S_t)_{t\in [L_j,L_{j+1})}$.

It remains to control $M_j$, which
is given by the sum $T_{0}+\cdots+T_{Q_j-1} + v^{-1}
V_{L_j}^{L{j+1}}(\S_\cdot)$ of service times plus traveling time.
The latter is non-negative and bounded by $2X_j/v$, which is bounded by
$2X_j^+/v$.
Therefore the inequality $M_j^- \leqslant M_j \leqslant M_j^+$ holds whenever
$Q_j/2 < T_{0}+\cdots+T_{Q_j-1} < 2Q_j$,
which in turn occur with exponentially high probability in $\ell_j$.

\medskip

Finally, we use the bound on $V_{L_j}^{L_{j+1}}(\S_\cdot)$ to prove strong
transience.
We add the requirement that $V_{L_j}^{L_{j+1}}(\S_\cdot)=X_j$ for $j=1$.
This changes the lower bound on probability of $A_1$, but it remains positive.
Notice that the same equality is true for $j=0$ by construction.
Now one can decompose $V_0^t(\S_\cdot)$ in distances traveled in each of the two
possible directions and again decompose these distances in the contribution from
each block, and combine the bounds on $X_{j-1}$ with
$\frac{4 D_j}{N_j} \leqslant \frac{2}{3} X_{j-1}^-$ to get
$V_0^t(\S_\cdot) \leqslant \frac{5}{3} {\S}_t$.

% \paragraph{Renewal argument}
\subsection{Renewal argument}
\label{sec:renewal}

Having~(\ref{eq:blockestimateprobab}) in hands, we finally prove
tightness of $\N_{\T_Z}$.
% Afterwards, in order to get tightness of $\N_\T$, 
We need that the probability
of success in each block~$j$ not only be bounded from below by some $p_j$ but
actually equal to $p_j$.
% \marginnote{\maltese}
We introduce an artificial coin toss to provide this last ingredient.

Enlarge the underlying probability space to add an independent sequence of
i.i.d.\ uniform variables $\tilde U_{j}$.
For each $j$, define the event $\tilde{A}_j \subseteq A_j$ by
\[
\tilde{A}_j = A_j \cap \left[  \tilde{U}_{j+1} < \frac{p_j}{\PP(A_j | \U_{L_j})}
\right]
.
\]
In words, we add an extra coin toss in order to have an exact equality instead
of an upper bound:
\[
\PP^{\,\U} ( \tilde{A}_j | \U_{L_j} )
= p_j
\mbox{ on }
\tilde{A}_0 \cap \dots \cap \tilde{A}_{j-1}.
\]

Notice that $J^0 = \min \{ j: \tilde{A}_{j-1} \mbox{ does not occur} \} \in
\{1,2,3,\dots\}\cup\{\oo\}$ is a stopping time with respect to
$\{ \tilde{\A}_{j} \}_{j=0,1,2,\dots}$, where $\tilde{\A}_j =
\sigma(\A_{L_j}, \tilde{U}_0,\tilde{U}_1,\dots,\tilde{U}_{j-1},\tilde{U}_{j})$.
The distribution of $J^0$ is given by
$\PP(J^0>k+1)=p_0 p_1 \cdots p_k$.
If $J^0 = \oo$, we have success for all $j$ and $(\S_t)_{t\geqslant 0}$ is
strongly transient, and we can take $\T_Z=0$.

If otherwise, $J^0<\infty$, we have $\N_{L_{J^0}} \leqslant Q^+(J^0)$, where
$Q^+(j) = Q_0^+ + Q_1^+ + \cdots + Q_{j}^+$.
In this case we can apply a time shift of $L_{J^0}$ and define
$(\U^1_t)_{t \geqslant 0}$ by
$\U^1_t=\U_{t+L_{J^0}}$.
For this evolution $(\U^1_t)_{t \geqslant 0}$ we can define the
stopping times $L^1_0,L^1_1,L^1_2,\dots$, the events
$\tilde{A}^1_0,\tilde{A}^1_1,\tilde{A}^1_2,\dots$, and the step of first
failure $J^1$.

By the strong Markov property, the conditional distribution of
$(\U^1_t)_{t \geqslant 0}$ given that $J^0 < \oo$ is given by
$\PP^{\,\U_{L_{J^0}}}$, and since
$\U_{L_{J^0}}$ is $\alpha$-unimodal, the
conditional distribution of $J^1$ given that $J^0<\oo$ is the same:
$\PP(J^1>k+1|J^0<\oo)=p_0 p_1 \cdots p_k$.

Again, if $J^1=\oo$, $(\S_{t+L_{J^0}})_{t\geqslant 0}$ is strongly transient
and we take $\T_Z = L_{J^0}$.
Otherwise, $\N_{L_{J^0}+L^1_{J^1}} \leqslant Q^+(J^0) +  Q^+(J^1)$.
Analogously we can construct $\U^2,J^2,\U^3,J^3,\dots$ until at some step
$K+1$ we get $J^{K+1} = \oo$.
We then take $\T_Z = L_{J^0} + L^1_{J^1} + \cdots + L^{K}_{J^K}$,
and we have that $(\S_{t+\T_Z})_{t \geqslant 0}$ is strongly transient.
As before, $\N_{\T_Z} \leqslant Q^+(J^0) +
Q^+(J^1) + \cdots + Q^+(J^K)$.
But the distribution of the latter upper bound does not depend on~$\U$,
and therefore $\N_{\T_Z}$ is tight.

\section*{Acknowledgments}

We are grateful to S.~Foss, who introduced us to this problem.
We thank M.~Jara for useful discussions.
This project had supported from grants
PICT-2015-3154,
PICT-2013-2137,
PICT-2012-2744,
PIP 11220130100521CO,
Conicet-45955,
MinCyT-BR-13/14 and
MathAmSud-2014-LSBS.

\renewcommand{\baselinestretch}{1}
\parskip 0pt
\small
\bibliographystyle{bib/rollaalphasiam}
\bibliography{bib/leo}

\end{document}